\documentclass[a4paper]{article}
\usepackage{mathptmx}
\usepackage{amscd,amssymb,amsmath,amsthm}

\textwidth=134mm
\oddsidemargin=12mm

\sloppy
\binoppenalty=10000
\relpenalty=10000

\makeatletter

\renewcommand{\@seccntformat}[1]
{{\csname the#1\endcsname}.\hspace{0.3em}}

\renewcommand{\section}{\@startsection
{section}
{1}
{0mm}
{-1.5\baselineskip}
{\baselineskip}
{\bfseries\normalsize}}

\renewcommand{\subsection}{\@startsection
{subsection}
{2}
{0mm}
{-\baselineskip}
{0.5\baselineskip}
{\normalsize\itshape}}

\renewcommand{\subsubsection}{\@startsection
{subsubsection}
{3}
{0mm}
{-.5\baselineskip}
{-2mm}
{\normalsize\itshape}}

\makeatother

\theoremstyle{plain}
\newtheorem*{theorem*}{Theorem}
\newtheorem{theorem}{Theorem}[section]
\newtheorem{lemma}{Lemma}[section]

\newtheorem{prop}[lemma]{Proposition}

\newtheorem*{corollary*}{Corollary}
\newtheorem*{openq}{Open Question}

\theoremstyle{definition}
\newtheorem*{defin*}{Definition}

\theoremstyle{remark}
\newtheorem{remark}{Remark}[section]

\newtheorem*{quest*}{Question}

\DeclareMathAlphabet{\matheur}{U}{eur}{m}{n}
\DeclareMathAlphabet{\matheus}{U}{eus}{m}{n}
\DeclareMathAlphabet{\matheuf}{U}{euf}{m}{n}

\numberwithin{equation}{section}


\newcommand{\abs}[1]{\left\lvert#1\right\rvert}
\newcommand{\norm}[1]{\left\lVert#1\right\rVert}

\DeclareMathOperator{\dist}{dist}
\DeclareMathOperator{\inj}{inj}
\DeclareMathOperator{\spt}{spt}
\DeclareMathOperator{\reg}{reg}

\DeclareMathOperator{\grad}{grad}

\DeclareMathOperator{\hess}{Hess}


\begin{document}

\author{Gerasim  Kokarev
\\ {\small\it School of Mathematics, University of Leeds}
\\ {\small\it Leeds, LS2 9JT, United Kingdom}
\\ {\small\it Email: {\tt G.Kokarev@leeds.ac.uk}}
}

\title{Remarks on the spectra of minimal hypersurfaces in the hyperbolic space}
\date{}
\maketitle

\begin{abstract}
\noindent
We compute the Laplacian spectra of singular area-minimising hypersurfaces in the hyperbolic space with prescribed asymptotic data. We also obtain similar results in higher codimension, and  explore related extremal properties of the bottom of the spectrum.
\end{abstract}

\medskip
\noindent
{\small
{\bf Mathematics Subject Classification (2010)}:  58J50, 49Q05, 49Q20

\noindent
{\bf Keywords}: Laplace spectrum, minimal submanifolds, area-minimising currents.}


\section{Introduction}
\label{intro}

\subsection{Main result}
In~\cite{LMMV} the authors study the spectrum of the Laplace operator on minimal submanifolds in space forms. In particular, among a number of results, they prove the following statement.
\begin{theorem}
\label{t0}
Let $\Sigma^m\subset\mathbf H^{n+1}$ be a proper $C^2$-smooth minimal submanifold  in the hyperbolic space $\mathbf H^{n+1}$ that extends to a $C^2$-smooth manifold with boundary in the closure ${\mathbf {\bar H}^{n+1}}$, obtained by adding the sphere at infinity $\mathbf H^{n+1}\cup S^n_\infty(\mathbf H^{n+1})$. Then the spectrum of the Laplace operator on $\Sigma^m$ coincides with the spectrum of the Laplace operator on the $m$-dimensional hyperbolic space $\mathbf H^m$, that is the interval $[(m-1)^2/4,+\infty)$.
\end{theorem}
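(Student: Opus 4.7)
My plan is to prove the two inclusions $\sigma(\Delta_\Sigma)\subset[(m-1)^2/4,+\infty)$ and $[(m-1)^2/4,+\infty)\subset\sigma(\Delta_\Sigma)$ separately. For the first, I would appeal to the classical $L^2$-inequality available on any $m$-dimensional minimal submanifold $\Sigma\subset\mathbf H^{n+1}$: for every $u\in C_c^\infty(\Sigma)$,
\[
\int_\Sigma |\nabla u|^2\, dV \ge \frac{(m-1)^2}{4}\int_\Sigma u^2\, dV.
\]
The standard derivation uses the restriction to $\Sigma$ of the distance function $r$ from a pole in $\mathbf H^{n+1}$: by minimality combined with the Hessian comparison in $\mathbf H^{n+1}$ one has $\Delta_\Sigma r\ge (m-1)\coth r\ge m-1$, so $\mathrm{grad}_\Sigma r$ is a vector field of length at most $1$ with divergence at least $m-1$. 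Inserting this into the identity $\int \mathrm{div}(u^2\,\mathrm{grad}_\Sigma r)=0$ and applying Cauchy--Schwarz yields the bound, and hence $\sigma(\Delta_\Sigma)\subset [(m-1)^2/4,+\infty)$.

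For the reverse inclusion I want to produce, for each $\lambda\ge(m-1)^2/4$, a Weyl sequence $\phi_k\in C_c^\infty(\Sigma)$ with $\|\phi_k\|_{L^2}=1$ and $\|(\Delta_\Sigma-\lambda)\phi_k\|_{L^2}\to 0$. The $\phi_k$ will be concentrated near a point $p\in\partial_\infty\Sigma:=\bar\Sigma\cap S^n_\infty$, which is nonempty by properness and the existence of the $C^2$-extension. I would work in an upper half-space model of $\mathbf H^{n+1}$ with $p=0$ and with the tangent $m$-plane $T_p\bar\Sigma$ adapted so that it coincides in $\mathbf R^{n+1}$ with the closure of a fixed totally geodesic half-space $\mathbf H^m\subset\mathbf H^{n+1}$. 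By the $C^2$-smoothness of $\bar\Sigma$ up to $p$, the submanifold $\Sigma$ is locally the graph of a $C^2$-map from $\mathbf H^m$ to $\mathbf R^{n+1-m}$ vanishing to second order at the origin. The Euclidean dilations $(x,y)\mapsto(\rho x,\rho y)$ are isometries of $\mathbf H^{n+1}$, and under them this graph converges to $\mathbf H^m$ in $C^2$ on every fixed bounded region as $\rho\to 0$.

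On the model $\mathbf H^m$ I would start from the exact solutions $y^{(m-1)/2+i\tau}$ of $\Delta u=\lambda u$ with $\tau^2=\lambda-(m-1)^2/4$, truncate them by a smooth cutoff $\chi$ on a large hyperbolic annulus to obtain model approximate eigenfunctions $\psi$ with $\|(\Delta-\lambda)\psi\|_{L^2}/\|\psi\|_{L^2}$ arbitrarily small. Transplanting $\psi$ to $\Sigma$ via the graph parametrisation composed with an Euclidean dilation by $\rho$, I obtain test functions $\phi_\rho$ whose spectral error splits into a model-side commutator contribution $[\Delta,\chi]$, which is small for wide cutoffs, and a term controlled by the $C^2$-deviation of the rescaled graph metric from that of $\mathbf H^m$, which vanishes as $\rho\to 0$ on any fixed bounded region. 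A diagonal selection $\rho=\rho(k)\to 0$ then furnishes the required sequence and shows that $\lambda$ lies in the spectrum.

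The technical obstacle is coordinating the two competing scales: the cutoff must be hyperbolically large for the model error to shrink, yet the support of $\phi_k$ must collapse Euclideanly toward $p$ so that the $C^2$-perturbation of the metric becomes negligible. The isometry action of the Euclidean dilations on the upper half-space provides the decoupling: the hyperbolic diameter of the truncated model eigenfunction is invariant under these dilations, while its Euclidean diameter shrinks freely. Running $\rho\to 0$ slowly enough relative to the enlargement of the hyperbolic annulus then completes the construction.
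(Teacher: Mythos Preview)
Theorem~\ref{t0} is quoted from~\cite{LMMV} and not proved in this paper; the natural comparison is with the paper's argument for its own Theorems~\ref{t1} and~\ref{t2}, which establish the same spectral conclusion for (possibly singular) area-minimising currents.

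Your lower bound via $-\Delta_\Sigma r\geqslant (m-1)\coth r$ and the divergence trick is essentially the same mechanism the paper uses, though the paper routes it through the isoperimetric comparison Theorem~\ref{tc} and Cheeger's inequality; both rest on inequality~\eqref{comparison}.

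For the inclusion $[(m-1)^2/4,+\infty)\subset\sigma(\Delta_\Sigma)$ your route is genuinely different. You localise near a single ideal boundary point in the half-space model, use the parabolic dilations to blow $\Sigma$ down onto its tangent totally geodesic $\mathbf H^m$, and transplant truncated eigenfunctions $y^{(m-1)/2+i\tau}$ from $\mathbf H^m$. The paper instead works in the Poincar\'e ball model with radial test-functions $\upsilon_R\circ r$ supported in large geodesic annuli $\{R/2<r<R\}$, using the full cone $C^m$ over $\Gamma^{m-1}$ as the comparison object (Lemmas~\ref{vol}, \ref{laplace:cone}, \ref{laplace:current}). Your approach is cleaner and more local for a smooth $\Sigma$; the paper's global annular construction is what allows it to push the supports of the test-functions uniformly away from the interior singular set in Theorems~\ref{t1} and~\ref{t2}, without singling out a boundary point.

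One point you pass over: arranging $T_p\bar\Sigma$ to coincide with the closure of a totally geodesic $\mathbf H^m$, i.e.\ a vertical $m$-plane in the half-space model, is precisely the statement that $\Sigma$ meets $S^n_\infty(\mathbf H^{n+1})$ orthogonally. This is not among the hypotheses of Theorem~\ref{t0} and must be derived from minimality; the paper does this in Remark~\ref{r:c1} via barriers by totally geodesic subspaces. Without orthogonality your graph function would not satisfy $Dw(0)=0$, and the dilated graphs would not converge to $\mathbf H^m$, so this step does need to be justified.
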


To our knowledge, when $m=n\geqslant 7$ non-trivial examples of minimal submanifolds that satisfy the hypotheses of Theorem~\ref{t0} occur mostly as minimal graphs, whose existence is related to very specific asymptotic data. The first purpose of this paper is to prove the version of this statement for singular area-minimising hypersurfaces, whose existence is always guaranteed for arbitrary asymptotic data by the results in ~\cite{An82}.

\begin{theorem}
\label{t1}
Let $\Gamma^{n-1}\subset S^n_\infty(\mathbf H^{n+1})$ be a closed oriented submanifold in the sphere at infinity, and let $\Sigma^n\subset \mathbf H^{n+1}$ be an area-minimising locally rectifiable $n$-current asymptotic to $\Gamma^{n-1}$, that is such that $\overline{\spt \Sigma^n}\cap S^n_\infty(\mathbf H^{n+1})=\Gamma^{n-1}$. Then the spectrum of the Laplace operator on $\Sigma^n$ is the interval $[(n-1)^2/4,+\infty)$.
\end{theorem}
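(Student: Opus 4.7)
My plan is to verify the two inclusions $\mathrm{spec}(\Delta_\Sigma)\subseteq[(n-1)^2/4,\infty)$ and $[(n-1)^2/4,\infty)\subseteq \mathrm{spec}(\Delta_\Sigma)$ separately, where $\Delta_\Sigma$ is the Friedrichs extension of the Laplacian initially defined on $C_c^\infty(\Sigma_{\reg})$. Since the interior singular set of an area-minimising integral $n$-current in $\mathbf H^{n+1}$ has Hausdorff codimension at least seven, it carries vanishing $H^1$-capacity and $C_c^\infty(\Sigma_{\reg})$ is a form core, so the construction does not depend on a choice.

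For the first inclusion I would follow the classical McKean argument. Pick a base point $q\in \mathbf H^{n+1}$ and let $\rho$ be the hyperbolic distance from $q$. The identity $\hess_{\mathbf H^{n+1}}\rho=\coth(\rho)(g-d\rho\otimes d\rho)$, combined with minimality of $\Sigma$ (so the mean-curvature correction vanishes), gives
$$
-\Delta_\Sigma \rho \;=\; \coth(\rho)\bigl(n-|\nabla_\Sigma \rho|^2\bigr) \;\geq\; (n-1)\coth(\rho) \;\geq\; n-1.
$$
For $f\in C_c^\infty(\Sigma_{\reg})$, with $q$ chosen outside $\spt f$, integration by parts yields
$$
(n-1)\int_\Sigma f^2 \;\leq\; -\int_\Sigma f^2\,\Delta_\Sigma\rho \;=\; -2\int_\Sigma f\,\langle\nabla_\Sigma f,\nabla_\Sigma \rho\rangle \;\leq\; 2\Bigl(\int_\Sigma f^2\Bigr)^{1/2}\Bigl(\int_\Sigma|\nabla_\Sigma f|^2\Bigr)^{1/2},
$$
since $|\nabla_\Sigma\rho|\leq 1$. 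This bounds the Rayleigh quotient from below by $(n-1)^2/4$.

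For the reverse inclusion I would construct a singular Weyl sequence for each $\lambda=(n-1)^2/4+s^2$, localised near a smooth point $p\in\Gamma$. By the boundary regularity theory for area-minimising hypersurfaces with smooth prescribed asymptotic data, $\Sigma$ admits a $C^{2,\alpha}$ extension up to $\Gamma$ in a neighbourhood of $p$. Arrange the upper-half-space model of $\mathbf H^{n+1}$ so that $p$ is the origin and $\Gamma$ is tangent at $p$ to $\{x_n=0\}$; then near $p$ the current $\Sigma$ is a $C^{2,\alpha}$-graph $x_n=u(x',y)$ over the totally geodesic slice $\mathbf H^n=\{x_n=0,\,y>0\}$, with $u$, $du$ and $\hess u$ tending to zero as $y\to 0$. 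Pulling back the generalised eigenfunctions $y^{(n-1)/2+is}$ of $\Delta_{\mathbf H^n}$ to $\Sigma$, multiplying by a fixed $x'$-bump and logarithmic $y$-cutoffs supported on disjoint horospherical shells $\{e^{-k_j-L_j}\le y\le e^{-k_j}\}$ that march toward $y=0$, and normalising in $L^2(\Sigma)$, produces a sequence $\{f_j\}$ of unit-norm, mutually orthogonal functions along which $\|(\Delta_\Sigma-\lambda)f_j\|_{L^2}\to 0$.

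The principal obstacle is the quantitative control of the perturbation $\Delta_\Sigma-\Delta_{\mathbf H^n}$ in the last step: on the shells supporting $f_j$ one needs $|u|+|du|+|\hess u|$ to vanish at a rate that beats the $|\log|$-growth produced by the cutoff, so that the error arising from replacing the hyperbolic Laplacian by the induced Laplacian on the graph is $o(1)$ in $L^2$ relative to $\|f_j\|_{L^2}$. This quantitative refinement of the boundary regularity for minimising currents asymptotic to $\Gamma\subset S^n_\infty(\mathbf H^{n+1})$, rooted in the existence theory of Anderson~\cite{An82}, is where the genuine work of the proof lies.
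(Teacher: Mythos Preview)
Your lower-bound argument is correct and more direct than the paper's: the paper derives McKean's inequality~\eqref{MK} via an isoperimetric comparison (Theorem~\ref{tc}) followed by Cheeger's inequality, whereas you integrate by parts against $\rho$ straight away.

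For the reverse inclusion the two approaches diverge. You work in the upper half-space, model $\Sigma$ locally as a graph over a totally geodesic $\mathbf H^n$, and transport the horospherical eigenfunctions $y^{(n-1)/2+is}$; this is essentially the strategy of~\cite{LMMV} behind Theorem~\ref{t0}, and it hinges on $C^2$ control of the graph. The paper instead stays in the Poincar\'e ball, takes the geodesic cone $C^m$ over $\Gamma$ as the comparison object, and uses radial test functions $\upsilon_R(r)$ supported in spherical shells $[R/2,R]$ (Lemmas~\ref{vol}--\ref{laplace:current}). The decisive point is that for a \emph{minimal} $\Sigma$ the Laplacian of a restricted ambient function is exactly $\mathrm{tr}_\Sigma\hess_{\mathbf H^{n+1}}f$, and for radial $f$ this depends only on $f'$, $f''$ and $|\nabla_\Sigma r|^2$. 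The entire error is governed by $1-|\nabla_\Sigma r|^2$, a first-order quantity that tends to zero because $\Sigma$ meets $S^n_\infty$ orthogonally, which is the $C^1$ content of Hardt--Lin (Proposition~\ref{p:hl}). No second-order boundary information is used, and this is precisely the sharpening over~\cite{LMMV} recorded in Remark~\ref{r:c1}.

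Your flagged obstacle is real for the graph-comparison route as you set it up: with $\Gamma$ merely \emph{tangent} to $\{x_n=0\}$ at $p$, the tangential derivatives $u_{x'}(x',0)$ need not vanish away from $p$, so $du\not\to 0$ as $y\to 0$ on a fixed $x'$-patch, contrary to what you assert. This is repaired by straightening $\Gamma$ to coincide with $\{x_n=0\}$ in a full neighbourhood; then $u$ and $du$ vanish on $\{y=0\}$, $C^2$ regularity (available via~\cite{L89a} for smooth $\Gamma$) gives $|du|=O(y)$, and the perturbation is controlled. So your outline can be completed, but the paper's route dissolves the difficulty rather than overcoming it, and does so under the weaker $C^{1,\alpha}$ hypothesis.

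A side remark: the paper explicitly records that essential self-adjointness of the Laplacian on $\reg\Sigma^n$ is open, so your assertion that it follows from the codimension-seven singular set goes beyond what is claimed here; it is in any case not needed for the Friedrichs extension.
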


Above by the spectrum of the Laplace operator on $\Sigma^n$ we mean the spectrum of the Friedrichs extension of the Laplacian defined on smooth functions with compact support inside the regular set $\reg\Sigma^n$. We refer to Section~\ref{prems} for the related background material and notation. The proof of Theorem~\ref{t1} builds on two ingredients. First, a comparison argument, reminiscent to the one in~\cite{Ko21,LMMV}, can be carried over to the singular setting to prove a version of McKean's bound for the bottom of the spectrum of singular minimal surfaces. This is an extension of closely related results in the smooth setting that go back to~\cite{CLY,Ma86}, see also~\cite{BM07}. Our main statement here is the comparison theorem for isoperimetric constants, Theorem~\ref{tc}, which might be of independent interest.

Second, to show that any value  above $(n-1)^2/4$ lies in the spectrum, we essentially rely on the boundary regularity results for area-minimising hypersurfaces due to Hardt and Lin~\cite{HL87}. This part of the argument uses specific behaviour at the boundary at infinity, and unlike Theorem~\ref{t0}, requires only $C^1$-smoothness up to boundary, see the discussion in Remark~\ref{r:c1}. Besides, related spectral properties hold for a more general class of locally rectifiable currents. More precisely, for a closed submanifold $\Gamma^{m-1}\subset S_\infty^n(\mathbf{H}^{n+1})$ denote by $\mathcal M(\Gamma^{m-1})$ the set of $m$-dimensional locally rectifiable currents $T^m$ in $\mathbf{H}^{n+1}$ whose regular set $\reg_1 T^m$, formed by points whose vicinity in $\spt T^m$ is a $C^1$-smooth submanifold, satisfies the following conditions:
\begin{itemize}
\item[(i)] the complement $\spt{T^m}\backslash \reg_1 T^m$ is bounded in $\mathbf{H}^{n+1}$, that is contained in some hyperbolic ball $\mathbf{B}_r\subset\mathbf{H}^{n+1}$;
\item[(ii)] the union $\reg_1 T^m\cup\Gamma^{m-1}$ is a $C^1$-smooth submanifold with boundary that meets the boundary sphere $S_\infty^n(\mathbf{H}^{n+1})$ orthogonally.
\end{itemize}
Below by $\lambda_*(T^m)$ we denote the fundamental tone of $T^m$, see Section~\ref{prems} for a precise definition. Our main observation is that the quantity $\lambda_*(T^m)$ satisfies a certain sharp bound when $T^m$ ranges in $\mathcal M(\Gamma^{m-1})$ and area-minimising currents that lie in this set are natural maximisers for $\lambda_*$.
\begin{theorem}
\label{t2}
Let $\Gamma^{m-1}\subset S_\infty^n(\mathbf{H}^{n+1})$ be a closed submanifold in the sphere at infinity. Then for any $T^m\in\mathcal M(\Gamma^{m-1})$ the fundamental tone satisfies the inequality 
\begin{equation}
\label{bs:bound}
\lambda_*(T^m)\leqslant\frac{1}{4}(m-1)^2.
\end{equation}
Further, there exists a complete area-minimising locally rectifiable $m$-current $\Sigma^m\in\mathcal M(\Gamma^{m-1})$ such that $\lambda_*(\Sigma^m)$ saturates inequality~\eqref{bs:bound}. In addition, the Laplacian spectrum of $\Sigma^m$ is the interval $[(m-1)^2/4,+\infty)$.
\end{theorem}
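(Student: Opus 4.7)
My plan splits the proof into three parts corresponding to the three assertions of the theorem, and then isolates what I expect to be the main difficulty.

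\emph{Upper bound on $\lambda_*(T^m)$ for arbitrary $T^m\in\mathcal M(\Gamma^{m-1})$.} By the very definition of $\mathcal M(\Gamma^{m-1})$, the union $\reg_1 T^m\cup\Gamma^{m-1}$ is $C^1$-smooth up to its boundary $\Gamma^{m-1}$ and meets $S^n_\infty(\mathbf H^{n+1})$ orthogonally. Fix a point $p\in\Gamma^{m-1}$ and use the Poincar\'e half-space model of $\mathbf H^{n+1}$ at $p$. The orthogonality hypothesis implies that $T^m$ is $C^1$-asymptotic, along a horoball decomposition centred at $p$, to the totally geodesic $m$-dimensional hyperbolic subspace $\mathbf H^m_p\subset\mathbf H^{n+1}$ whose tangent plane at $p$ is the tangent of $T^m$ at $p$. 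On $\mathbf H^m_p$ the functions $e^{-(m-1)b/2}$, with $b$ a Busemann function centred at an interior endpoint of a geodesic through $p$, multiplied by smooth horospherical cutoffs $\chi_R$ supported in the far end, have Rayleigh quotients tending to $(m-1)^2/4$. Pulling these back to $\reg T^m$ through the $C^1$-close-to-identity identification given by the orthogonality picks up only $o(1)$ errors in both numerator and denominator, yielding $\lambda_*(T^m)\leqslant(m-1)^2/4$.

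\emph{Existence of a saturating area-minimising current.} Anderson's theorem~\cite{An82} produces an area-minimising locally rectifiable $m$-current $\Sigma^m$ asymptotic to $\Gamma^{m-1}$; interior regularity confines its singular set to a bounded region, while the boundary regularity of Hardt and Lin~\cite{HL87} shows that $\reg_1\Sigma^m\cup\Gamma^{m-1}$ is $C^1$-smooth and meets $S^n_\infty(\mathbf H^{n+1})$ orthogonally. Hence $\Sigma^m\in\mathcal M(\Gamma^{m-1})$. The lower bound $\lambda_*(\Sigma^m)\geqslant(m-1)^2/4$ follows by combining the isoperimetric comparison of Theorem~\ref{tc} with the Cheeger-type inequality discussed in the introduction. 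Together with the upper bound from the previous paragraph, this forces equality.

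\emph{Full spectrum.} It remains to verify that every $\lambda\geqslant (m-1)^2/4$ belongs to the spectrum of the Friedrichs extension. Writing $\lambda=(m-1)^2/4+\eta^2$ with $\eta\in\mathbb R$, the asymptotic $\mathbf H^m$-model of $\Sigma^m$ near a boundary point $p\in\Gamma^{m-1}$ carries the classical generalized eigenfunctions $e^{-(m-1)b/2}e^{i\eta b}\,\varphi$, with $\varphi$ a smooth transverse profile on a horosphere. Localising these by cutoffs supported deep in the asymptotic region and transplanting to $\reg\Sigma^m$ via the same $C^1$-approximation used above yields a Weyl sequence $\phi_k\in C_c^\infty(\reg\Sigma^m)$ with $\|\phi_k\|_{L^2}=1$ and $\|(\Delta-\lambda)\phi_k\|_{L^2}\to 0$. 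The reverse inclusion is a consequence of the fundamental tone computation already established.

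\emph{Main obstacle.} The delicate step is producing the Weyl sequences, which requires controlling the deviation of the Laplacian of $\Sigma^m$ from that of its $\mathbf H^m$-model up to second order, whereas Hardt--Lin supplies only $C^1$-regularity up to the boundary. The point is to exploit the strong horospherical decay of the approximate eigenfunctions so that their $L^2$-mass is pushed arbitrarily far into the asymptotic region; then $C^0$-smallness of the difference of the two metrics on far-out horoballs, which does follow from the orthogonality and $C^1$-boundary regularity, is enough to absorb the error terms. This mechanism simultaneously extends the argument underlying Theorem~\ref{t1} to arbitrary codimension.
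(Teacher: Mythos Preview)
Your three-part structure matches the paper's, but the implementation is genuinely different. The paper works in the Poincar\'e ball model with test functions that are \emph{radial} from the centre $p$, and its comparison object is the geodesic \emph{cone} $C^m$ over $\Gamma^{m-1}$ from $p$, not a totally geodesic $\mathbf H^m$. On $C^m$ a radial function has Laplacian exactly $f''+(m-1)\coth(r)f'$, so both the Rayleigh-quotient and the Weyl-sequence computations reduce to explicit one-variable ODE estimates (Lemma~\ref{est}), and the transfer to $T^m$ or $\Sigma^m$ is done via a volume-comparison lemma (Lemma~\ref{vol}) and a Laplacian-error lemma (Lemma~\ref{laplace:current}). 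Your horospherical approach at a single boundary point, with a totally geodesic model, is a legitimate alternative and closer to Busemann-function arguments elsewhere in the literature; the cone picture has the advantage of treating all of $\Gamma^{m-1}$ at once and of making the weights $\sinh^{m-1}(r)$ appear explicitly.

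Two points need attention. First, Hardt--Lin~\cite{HL87} is a codimension-one result; to place the area-minimising solution in $\mathcal M(\Gamma^{m-1})$ for general $m$ you need Lin~\cite{L89b} (Proposition~\ref{p:l} in the paper), which produces an Anderson-type solution that is $C^{1,\alpha}$ up to the boundary.

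Second, and more seriously, your resolution of the ``main obstacle'' does not work as stated. If you transplant test functions from $\mathbf H^m_p$ to $\Sigma^m$ through a $C^1$-close identification, the pulled-back metric is only $C^0$-close to the hyperbolic one, and $C^0$-closeness of metrics does \emph{not} control the difference of Laplacians: the latter involves Christoffel symbols, i.e.\ first derivatives of the metric. Pushing supports to infinity does not help, because the pointwise error in $\Delta$ carries the same weight as the function itself. The paper avoids this by never pulling back. It restricts smooth \emph{ambient} radial functions $f$ to $\reg\Sigma^m$ and uses minimality to write $-\Delta_\Sigma(f|_\Sigma)=\operatorname{tr}_{T_x\Sigma}\hess_{\mathbf H^{n+1}}f$; since the ambient Hessian is a fixed smooth tensor, the only error comes from the deviation of $T_x\Sigma$ from the model tangent space, and that is $C^0$-small by the $C^1$-boundary regularity and orthogonality (Lemma~\ref{laplace:current}). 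Your Busemann-type test functions are themselves restrictions of ambient functions, so the same mechanism is available to you---but you must invoke minimality explicitly and replace the diffeomorphism-pullback picture by restriction of ambient functions.
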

Note that, since any complete Riemannian manifold can be properly isometrically embedded into the hyperbolic space $\mathbf{H}^{n+1}$ for a sufficiently large $n$, via an embedding into a horosphere, some hypotheses on the asymptotic behaviour of submanifolds in $\mathbf{H}^{n+1}$ are necessary for any bound for the fundamental tone to hold.  We believe that the choice of $\mathcal M(\Gamma^{m-1})$ is very natural in view of the regularity theory for the asymptotic Plateau problem, and our arguments in Section~\ref{proofs} underline an explicit relation to the spectral properties of the asymptotic cone. Although our approach to the proof of inequality~\eqref{bs:bound} is rather elementary, unlike other papers in the literature, it does not use any volume growth assumptions or curvature behaviour at infinity, see~\cite{LMMV} and references therein. 

The area-minimising $m$-current in Theorem~\ref{t2} is the so-called Anderson solution, whose regularity is studied by Lin in~\cite{L89b}. The fact that its bottom of the spectrum equals the maximal value is an immediate consequence of the version of McKean's bound mentioned above. In particular, by McKean's bound inequality~\eqref{bs:bound} is saturated by any stationary current $\Sigma^m\in\mathcal M(\Gamma^{m-1})$. However, there are also not necessarily stationary currents in $\mathcal M(\Gamma^{m-1})$, for example, hyperbolic cones, which maximise the fundamental tone, see Remark~\ref{rem:cones}. Understanding the phenomena responsible for the equality in~\eqref{bs:bound} seems to be a more subtle problem, which we plan to address in future work. Mention that the occurrence of  stationary currents that saturate inequality~\eqref{bs:bound} is reminiscent to the role of minimal surfaces in the classical extremal eigenvalue problems, see~\cite{FS13,Ko14} and references therein.

Our argument in the proof of Theorem~\ref{t2} shows that for any stationary current $\Sigma^m$ in the class $\mathcal M(\Gamma^{m-1})$ its Laplacian spectrum is precisely the interval $[(m-1)^2/4,+\infty)$. However, the answer to the following principal question in full generality is unknown:
\begin{openq}
Let $\Sigma^m$ be an area-minimising (or more generally, stationary) current in $\mathbf{H}^{n+1}$ that is asymptotic to a smooth submanifold $\Gamma^{m-1}$, in the sense that $\overline{\spt \Sigma^m}\cap S^n_\infty(\mathbf H^{n+1})=\Gamma^{m-1}$. Does the spectrum of the Friedrichs extension of the Laplace operator on $\Sigma^m$ coincide with the interval $[(m-1)^2/4,+\infty)$?
\end{openq}
Our Theorem~\ref{t1} settles this question for  area-minimising locally rectifiable currents in co-dimension one, that is  when $m=n$. In higher co-dimension, as the discussion above shows,  the answer would be also positive, if developing boundary regularity theory, one can show that a stationary current $\Sigma^m$ lies in $\mathcal M(\Gamma^{m-1})$. For example, by results in~\cite{L89b}, so are area-minimising flat chains modulo two. Note that stationary currents that are $C^1$-smooth near and up to boundary automatically lie in the class $\mathcal M(\Gamma^{m-1})$, see Remark~\ref{r:c1}. In particular, this observation sharpens the hypotheses of Theorem~\ref{t0}.

\subsection{Content and organisation of the paper}
Section~\ref{prems} contains most of the background material. First, we introduce necessary terminology from geometric measure theory and recall principal existence and regularity results for the asymptotic Plateau problem. Then we discuss self-adjoint extensions of the Laplace operator defined on rectifiable currents. We end with the notion of the fundamental tone and its relationship with the bottom of spectrum. In Section~\ref{comp} we discuss the comparison theorem for isoperimetric constants of stationary currents. Here we follow closely the notation and computations in~\cite{Ko21}. Section~\ref{proofs} contains the proofs of Theorems~\ref{t1} and~\ref{t2}. We essentially focus on the latter, the proof of the former follows the line of argument used to compute the Laplace spectrum of an area-minimising current in Theorem~\ref{t2}. The principal part of the argument is based on the construction of suitable test-functions and neat estimates for the corresponding values of the Laplacian near the ideal boundary. In the last section we collect a few remarks. They are mostly consequences of the proof of Theorem~\ref{t2}, and explain certain points that have been made in the introduction.


\section{Preliminary material}
\label{prems}
\subsection{Notation and first results}
We start with recalling basic notation from geometric measure theory; our main references are~\cite{FM,LS}.

Let $M$ be an oriented complete smooth Riemannian manifold. By $\mathcal D^n$ we denote the space of smooth compactly supported $n$-forms on $M$. The dual space $\mathcal D_n$, equipped with the weak topology, is called the space of $n$-{\em dimensional currents}. If $\Sigma^n$ is an oriented $n$-dimensional submanifold of $M$ with compact closure and finite $n$-dimensional volume, then it defines an $n$-current $[\![\Sigma^n]\!]$ by the formula
$$
[\![\Sigma^n]\!](\varphi)=\int_{\Sigma^n}\varphi,\qquad\text{ ~where }\varphi\in\mathcal D^n.
$$
More generally, a {\em rectifiable $n$-current} is a functional of the form $\sum\mu_i[\![\Sigma_i]\!]$, where $\{\Sigma_i\}$ is a countable collection of mutually disjoint rectifiable sets such that the closure of $\cup_i\Sigma_i$ is compact, $\mu_i$ is a positive integer multiplicity function such that $\sum\mu_i\mathcal H^n(\Sigma_i)<+\infty$, and $\mathcal H^n$ stands for the $n$-dimensional Hausdorff measure. The space of rectifiable $n$-currents is denoted by $\mathcal R_n$. For a current $S=\sum\mu_i[\![\Sigma_i]\!]$ its support is the closure of the union $\cup_i\Sigma_i$,
$$
\spt S=\overline{\cup_i\Sigma_i}.
$$
The boundary of an $n$-dimensional current $S\in\mathcal D_n$ is the $(n-1)$-dimensional current $\partial S\in\mathcal D_{n-1}$ defined by
$$
\partial S(\varphi)=S(d\varphi), \qquad\text{ ~where }\varphi\in\mathcal D^{n-1}.
$$
By Stokes's Theorem, this definition agrees with the standard notion of boundary, if $S$ is a smooth oriented manifold with boundary. By $\mathbf I_n$ we denote the space of {\em integral $n$-currents}, that is $n$-currents $S$ such that $S\in\mathcal R_n$ and $\partial S\in\mathcal R_{n-1}$.

In the sequel we are interested in currents with non-compact support. By $\mathcal R_n^{\mathit{loc}}$ we denote the space of {\em locally rectifiable $n$-currents}, formed by $S\in\mathcal D_n$ such that for any $x\in M$ there exists $T\in\mathcal R_n$ such that $x\notin\spt(S-T)$. Similarly, by $\mathbf I^{\mathit{loc}}_n$ we denote the space of {\em locally integral $n$-currents} defined by the condition above with $T\in\mathbf I_n$.

There is a natural notion of {\em mass} defined on $\mathcal D_n$:
$$
\mathbf{M}(S)=\sup\{S(\varphi): \varphi\in\mathcal D^n, \sup_x\abs{\varphi(x)}^*\leqslant 1\},
$$
where $x$ ranges in $M$, and by $\abs{\varphi(x)}^*$ we mean the supremum of the values of the form $\varphi(x)$ on simple unit $n$-vectors. Similarly, we have the following function, 
$$
\norm{S}(U)=\sup\{S(\varphi): \varphi\in\mathcal D^n, \sup_x\abs{\varphi(x)}^*\leqslant 1, \spt\varphi\subset U\},
$$
defined on open subsets $U$ in $M$. If $S$ has finite mass, then $\norm{S}$ extends to a Radon measure, denoted by the same symbol. It is straightforward to see that if $S=[\![\Sigma^n]\!]\in\mathcal R_n$, where $\Sigma^n$ is an $n$-dimensional submanifold, then $\norm{S}$ is the volume measure on $\Sigma^n$, and $\mathbf{M}(S)$ is the volume of $\Sigma^n$. Finally, a current $S\in\mathcal{R}^{\mathit{loc}}_n$ is called {\em absolutely area-minimising}, if for any compact subset $K\subset M$ the inequality
$$
\mathbf{M}(S\llcorner K)\leqslant\mathbf{M}(T)
$$
holds for all $T\in\mathcal{R}_n$ such that $\partial(S\llcorner K)=\partial T$. Above by $S\llcorner K$ we mean the restriction to a compact set $K$, defined as $S\llcorner K(\varphi)=S(\chi_K\varphi)$, where $\chi_K$ is the characteristic function of a compact set $K$, and the value $S(\chi_K\varphi)$ is understood as the weighted integral over the support of $S$. More generally, a current $S\in\mathcal R_n^{\mathit{loc}}$ is called {\em stationary} if for all compact subsets $K\subset M$ the relation
$$
\left.\frac{d}{dt}\right|_{t=0}\mathbf{M}((\phi_t^V)_*(S\llcorner K))=0
$$
holds for all vector fields $V$ with support in $K$, where $\phi_t^V$ is the flow of $V$.

In the sequel we often consider $M$ to be the hyperbolic space $\mathbf H^{n+1}$, that is the complete simply connected $(n+1)$-dimensional Riemannian manifold whose all sectional curvatures are equal to $-1$. It is convenient to identify $\mathbf H^{n+1}$ with an open unit ball $\mathbb{B}^{n+1}$ via the Poincar\'e model. Every point on the boundary sphere $p\in\partial\mathbb{B}^{n+1}$ represents an equivalence class of asymptotic geodesics in $\mathbf{H}^{n+1}$, and $\partial\mathbb{B}^{n+1}$ is naturally identified with the so-called boundary at infinity, also denoted by $S^n_\infty(\mathbf H^{n+1})$. The following statement by Anderson~\cite{An82} is fundamental for our main result, Theorem~\ref{t1}.
\begin{prop}
\label{p:a}
Let $\Gamma^{n-1}\subset S^n_\infty(\mathbf H^{n+1})$ be a closed oriented submanifold in the sphere at infinity, where $n\geqslant 2$. Then there exists a complete area-minimising locally integral $n$-current $\Sigma^n$ in $\mathbf H^{n+1}$ that is asymptotic to $\Gamma^{n-1}$ in the sense that $\overline{\spt \Sigma^n}\cap S^n_\infty(\mathbf H^{n+1})=\Gamma^{n-1}$. 
\end{prop}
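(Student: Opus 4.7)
The plan is to realise $\Sigma^n$ as a limit of solutions to the compact Plateau problem for a suitable exhaustion of $\Gamma^{n-1}$. Working in the Poincar\'e ball model, I would first approximate $\Gamma$ by a sequence of smooth compact oriented $(n-1)$-submanifolds $\Gamma_i\subset\mathbf{H}^{n+1}$, for instance by radially retracting $\Gamma$ onto the concentric Euclidean spheres of radius $1-1/i$, so that the $\Gamma_i$ Hausdorff-converge to $\Gamma$ at the sphere at infinity. By the Federer--Fleming existence theorem for the Plateau problem in complete Riemannian manifolds, for each $i$ there is an area-minimising integral current $\Sigma_i\in\mathbf{I}_n$ with $\partial \Sigma_i = [\![\Gamma_i]\!]$. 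A barrier argument using totally geodesic hyperplanes, together with the maximum principle, confines $\spt\Sigma_i$ to the hyperbolic convex hull of $\Gamma_i$, which is in turn contained in a fixed neighbourhood of the convex hull of $\Gamma$.

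Next I would establish uniform local mass bounds: for every hyperbolic ball $\mathbf{B}_r(p)\subset\mathbf{H}^{n+1}$, $\mathbf{M}(\Sigma_i\llcorner \mathbf{B}_r(p))\leqslant C(p,r)$ independently of $i$. This is obtained by a standard cone comparison: one slices $\Sigma_i$ along a slightly larger concentric sphere, chooses a good radius via the co-area formula so that the slice has controlled $(n-1)$-mass, and caps off by the geodesic cone from $p$, whose area is bounded in terms of the sliced mass. With these uniform bounds, the Federer--Fleming compactness theorem applied on compact subsets extracts a subsequence converging in the flat topology on compacts to a locally integral current $\Sigma^n\in\mathbf{I}^{\mathit{loc}}_n$. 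Since the $\Gamma_i$ eventually leave every compact set of $\mathbf{H}^{n+1}$, the limit satisfies $\partial\Sigma^n=0$ in $\mathbf{H}^{n+1}$. Lower semicontinuity of mass together with the minimising property of the $\Sigma_i$ and a routine cut-and-paste comparison shows that $\Sigma^n$ is absolutely area-minimising.

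The main obstacle is identifying the boundary at infinity of $\Sigma^n$. The inclusion $\overline{\spt\Sigma^n}\cap S_\infty^n(\mathbf{H}^{n+1})\subset\Gamma^{n-1}$ follows directly from the convex-hull confinement of the previous step, as the convex hulls of the $\Gamma_i$ converge and meet $S_\infty^n$ precisely in $\Gamma^{n-1}$. The reverse inclusion $\Gamma^{n-1}\subset\overline{\spt\Sigma^n}\cap S_\infty^n$ is the heart of the argument: one must rule out that the $\Sigma_i$ collapse away from the ideal boundary in the limit. I would do this by constructing barriers near a given point $p\in\Gamma$ out of horospheres tangent to $S_\infty^n$ at $p$ and out of equidistant hypersurfaces to totally geodesic hyperplanes separating $p$ from a complementary piece of $\Gamma$. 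Combined with a monotonicity-type lower mass bound on the intersection of each $\Sigma_i$ with small hyperbolic balls anchored near $p$ -- using that $\Sigma_i$ is area-minimising and that its boundary $\Gamma_i$ accumulates at $p$ -- these barriers force $\Sigma_i$ to carry a definite amount of mass in every neighbourhood of $p$, a property preserved under flat convergence and hence yielding $p\in\overline{\spt\Sigma^n}$.
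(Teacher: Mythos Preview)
The paper does not give a proof of this proposition: it is stated as a result of Anderson~\cite{An82} and used as a black box. Your outline is essentially a sketch of Anderson's original argument --- approximation of $\Gamma$ by compact cycles $\Gamma_i$ inside $\mathbf{H}^{n+1}$, solution of the compact Plateau problem, convex-hull confinement via totally geodesic barriers, local mass bounds and Federer--Fleming compactness, and finally identification of the asymptotic boundary --- so in spirit it matches what the paper invokes.

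One point deserves tightening. The cone comparison you describe for the local mass bound, as written, is circular: slicing via coarea controls the slice mass only in terms of the mass of $\Sigma_i$ in a slightly larger ball, which is exactly what you are trying to bound. What actually works is to combine the cone comparison with the \emph{global} competitor: since $\mathbf{M}(\Sigma_i)\leqslant\mathbf{M}(C_i)$ for the geodesic cone $C_i$ over $\Gamma_i$, and since the cone comparison yields monotonicity of the ratio $\mathbf{M}(\Sigma_i\llcorner\mathbf{B}_r)/\int_0^r\sinh^{n-1}(t)\,dt$, one obtains
\[
\mathbf{M}(\Sigma_i\llcorner\mathbf{B}_R)\leqslant\omega(\Gamma_i)\int_0^R\sinh^{n-1}(t)\,dt,
\]
which is uniform in $i$ because $\omega(\Gamma_i)\to\omega(\Gamma)$. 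This is the estimate Anderson proves. With this correction your plan goes through; the barrier-and-monotonicity argument you propose for the reverse inclusion $\Gamma^{n-1}\subset\overline{\spt\Sigma^n}$ is also along the right lines, though in Anderson's paper this step is handled somewhat differently, via a linking/homological argument ensuring the $\Sigma_i$ cannot retract away from $\Gamma$.
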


The existence of area-minimising currents continues to hold in higher codimension and under weaker assumptions on the submanifold $\Gamma\subset S^n_\infty(\mathbf H^{n+1})$, see~\cite{An82}. In particular, in the sequel we may always assume that $\Gamma$ is only $C^{1,\alpha}$-smooth, where $0\leqslant\alpha\leqslant 1$. By standard regularity theory, see~\cite{FM,LS}, the area-minimising current in Proposition~\ref{p:a} is a smooth embedded manifold in the complement of the singular set of Hausdorff dimension at most $n-7$. For the sequel we need the following important boundary regularity statement due to Hardt and Lin~\cite{HL87}.

\begin{prop}
\label{p:hl}
Let $\Gamma^{n-1}\subset S^n_\infty(\mathbf H^{n+1})$ be a closed $C^{1,\alpha}$-smooth submanifold in the sphere at infinity, where $0\leqslant\alpha\leqslant 1$, and let $\Sigma^n$ be an area-minimising locally rectifiable $n$-current in $\mathbf H^{n+1}$ that is asymptotic to $\Gamma^{n-1}$. Then there exists a neighbourhood $U$ of $\Gamma^{n-1}$ in the closure $\bar{\mathbf{H}}^{n+1}$ such that there is no singular set of $\Sigma^n$ in $U$, and the union $(\spt\Sigma^n\cap U)\cup\Gamma^{n-1}$ is a $C^{1,\alpha}$-smooth manifold with boundary that meets the ideal boundary $S^n_\infty(\mathbf H^{n+1})$ orthogonally. 
\end{prop}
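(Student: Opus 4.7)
The plan is to reduce the problem to a boundary regularity question for area-minimising currents in a Euclidean half-space by exploiting the conformal compactification of the hyperbolic space. Fix a boundary point $p\in\Gamma^{n-1}$; applying a M\"obius transformation, we pass to the upper half-space model of $\mathbf{H}^{n+1}$, identifying $p$ with the origin of $\partial\mathbb{R}^{n+1}_+=\mathbb{R}^n$. In these coordinates the hyperbolic metric becomes $g_{\mathbf{H}}=x_{n+1}^{-2}g_{\mathrm{Eucl}}$, and the submanifold $\Gamma^{n-1}$ extends across the boundary hyperplane as a $C^{1,\alpha}$-smooth submanifold. The asymptotic condition translates into $\spt\Sigma^n$ accumulating on the flat boundary exactly along $\Gamma^{n-1}$.

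The first key step is a tangent-cone analysis at boundary points. At $q\in\Gamma^{n-1}$ one performs Euclidean dilations $\delta_\rho$ centred at $q$. Although hyperbolic area is not preserved under $\delta_\rho$, the rescaled currents remain minimisers for the conformally rescaled metric $\rho^{-2}x_{n+1}^{-2}g_{\mathrm{Eucl}}$, which converges on compact subsets of $\mathbb{R}^{n+1}_+$ to a flat reference metric. Via Federer--Fleming compactness and a boundary monotonicity formula, derived in horospherical coordinates where the generator of hyperbolic scaling is a conformal Killing field, one extracts tangent cones $C$ with $\partial C\subset T_q\Gamma^{n-1}\subset\mathbb{R}^n$. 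A constancy argument, combined with the classification of area-minimising half-planes with smooth boundary trace, forces every such tangent cone to be the totally geodesic half-hyperplane whose ideal boundary is $T_q\Gamma$, taken with multiplicity one. Orthogonality is then automatic, since totally geodesic half-hyperplanes in the upper half-space model are precisely those Euclidean half-planes perpendicular to the boundary hyperplane.

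The second step is an Allard-type boundary regularity theorem. Once every boundary tangent cone is known to be a single multiplicity-one half-plane, a standard excess-decay iteration --- in the formulation adapted by Allard, or its current-theoretic variant developed by Hardt --- shows that $\spt\Sigma^n$ is locally a $C^{1,\alpha}$-graph over this half-plane uniformly in a neighbourhood $U$ of $\Gamma^{n-1}$. In particular, no singular points of $\Sigma^n$ can accumulate at $\Gamma^{n-1}$, so $(\spt\Sigma^n\cap U)\cup\Gamma^{n-1}$ is a $C^{1,\alpha}$-smooth manifold with boundary, and the H\"older exponent $\alpha$ is inherited from the regularity of $\Gamma$ via a Schauder-type improvement at the end.

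The principal obstacle is setting up the boundary monotonicity and excess-decay estimates in a setting whose ambient metric is not scale invariant: the conformal factor $x_{n+1}^{-2}$ diverges at the boundary, so one must either keep the hyperbolic metric and establish an ad hoc monotonicity in horospherical variables, or work in the rescaled Euclidean metric, at the cost of $\Sigma^n$ being only a perturbed minimiser whose error term must be controlled quantitatively. Matching this perturbation with the H\"older continuity of $\Gamma$ to produce the sharp $C^{1,\alpha}$ conclusion constitutes the principal technical difficulty of the argument.
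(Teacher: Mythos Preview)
The paper does not give its own proof of this proposition: it is stated as background and attributed to Hardt and Lin~\cite{HL87}, with the surrounding text simply saying ``the following important boundary regularity statement due to Hardt and Lin.'' So there is nothing in the paper to compare your argument against beyond the citation.

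That said, your outline is a faithful sketch of the Hardt--Lin strategy: pass to the half-space model, analyse tangent cones at boundary points under Euclidean dilations, use barriers by totally geodesic hyperplanes to force each tangent cone to be a single multiplicity-one half-plane orthogonal to $\partial\mathbb{R}^{n+1}_+$, and then invoke an Allard/Hardt-type boundary excess-decay to upgrade to a $C^{1,\alpha}$ graph near $\Gamma^{n-1}$. You have also correctly identified the genuine technical issue, namely that the hyperbolic volume element $x_{n+1}^{-n}$ is not dilation-invariant, so the monotonicity and excess-decay machinery must be set up either for a degenerate weighted functional or for a perturbed Euclidean minimiser. If you were actually writing this up rather than citing it, the step most in need of detail is the classification of boundary tangent cones: this is where the barrier/maximum-principle argument (foliating by totally geodesic hyperplanes, as in Anderson and in \cite[Section~1]{HL87}) enters essentially, and your proposal alludes to it only implicitly through the phrase ``constancy argument.''
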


In~\cite{L89a} Lin also shows that if $\Gamma^{n-1}\subset S^n_\infty(\mathbf H^{n+1})$ in Proposition~\ref{p:hl} is $C^{k,\alpha}$-smooth, where $1\leqslant k\leqslant n-1$  and $0\leqslant\alpha\leqslant 1$ or $k=n$ and $0\leqslant\alpha<1$, then $(\Sigma^n\cap U)\cup\Gamma^{n-1}$ is a $C^{k,\alpha}$-smooth manifold with boundary. The boundary regularity in higher codimension is a more subtle question. However, the following result, due to~\cite{L89b}, guarantees the existence of an area-mininising current regular near the boundary.

\begin{prop}
\label{p:l}
Let $\Gamma^{m-1}\subset S^n_\infty(\mathbf H^{n+1})$ be a closed $C^{1,\alpha}$-smooth submanifold in the sphere at infinity, where $0\leqslant\alpha\leqslant 1$. Then there exists a complete area-minimising locally rectifiable $m$-current $\Sigma^m$ in $\mathbf H^{n+1}$ that is asymptotic to $\Gamma^{m-1}$ at infinity. Moreover, near $\Gamma^{m-1}$ the set $\spt\Sigma^m\cup\Gamma^{m-1}$ is a $C^{1,\alpha}$-smooth submanifold with boundary that meets the ideal boundary $S^n_\infty(\mathbf H^{n+1})$ orthogonally.
\end{prop}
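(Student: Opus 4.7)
The plan is to construct $\Sigma^m$ as a limit of compact area-minimising currents whose boundaries approximate $\Gamma^{m-1}$ from the interior, and then separately establish the asymptotic behaviour and the $C^{1,\alpha}$-regularity up to $S^n_\infty(\mathbf H^{n+1})$. Identifying $\mathbf H^{n+1}$ with the Poincar\'e ball $\mathbb{B}^{n+1}$, I would set $\Gamma^{m-1}_k$ to be the radial scaling of $\Gamma^{m-1}$ by a factor $1-1/k$, so that each $\Gamma^{m-1}_k$ is compact, $C^{1,\alpha}$, and approaches $\Gamma^{m-1}$ in the $C^{1,\alpha}$-topology of the closed ball. By the classical Federer--Fleming existence theorem, each $\Gamma^{m-1}_k$ bounds a compact integer-multiplicity area-minimising current $\Sigma^m_k$. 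Using uniform mass bounds on $\Sigma^m_k$ in compact subsets of $\mathbf H^{n+1}$, obtained by comparison with truncated cones over $\Gamma^{m-1}_k$, together with the compactness theorem for integral currents, I would extract a subsequential limit $\Sigma^m\in\mathcal R_m^{\mathit{loc}}$ that is locally area-minimising.

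To pin down the ideal boundary $\overline{\spt\Sigma^m}\cap S^n_\infty(\mathbf H^{n+1})$, I would invoke maximum-principle arguments with barriers adapted to the hyperbolic geometry. For any $q\in S^n_\infty(\mathbf H^{n+1})\setminus\Gamma^{m-1}$, a small Euclidean half-space in the Poincar\'e ball centred near $q$ is disjoint from $\Gamma^{m-1}_k$ for large $k$ and has totally geodesic boundary, so $\Sigma^m_k$ cannot enter it without contradicting its minimising property; hence $\overline{\spt\Sigma^m}\cap S^n_\infty(\mathbf H^{n+1})\subset\Gamma^{m-1}$. Conversely, barriers built from horospheres tangent to $S^n_\infty(\mathbf H^{n+1})$ at points near a given $p\in\Gamma^{m-1}$ force $\spt\Sigma^m$ to accumulate at $p$, yielding the reverse inclusion.

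The delicate step is the $C^{1,\alpha}$-regularity of $\spt\Sigma^m\cup\Gamma^{m-1}$ near the ideal boundary, together with the orthogonality statement. I would localise near a point $p\in\Gamma^{m-1}$ and pass to the upper half-space model of $\mathbf H^{n+1}$ with metric $g_H=t^{-2}(\abs{dx}^2+dt^2)$, so that $\Gamma^{m-1}\subset\{t=0\}$. A blow-up analysis along hyperbolic dilations centred at $p$ shows that any tangent current is stationary with respect to the underlying Euclidean metric, with boundary supported on the tangent Euclidean $(m-1)$-plane to $\Gamma^{m-1}$ at $p$. The minimising property together with the divergent conformal factor $t^{-2}$ forces every such tangent cone to be the multiplicity-one $m$-dimensional half-plane orthogonal to $\{t=0\}$: a first-variation computation shows that any other angle of intersection produces a divergent area contribution in the hyperbolic metric, inconsistent with minimality. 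With this tangent-cone rigidity in hand, Allard's boundary regularity theorem for stationary integral currents yields the $C^{1,\alpha}$-regularity up to $\Gamma^{m-1}$ and the orthogonality statement.

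The main obstacle is the tangent-cone rigidity at boundary points, where in higher codimension one must rule out multiplicity, folded, and non-orthogonal tangent cones a priori compatible with the $C^{1,\alpha}$-smoothness of $\Gamma^{m-1}$. Supplying this rigidity, together with the quantitative decay rates needed to run Allard's scheme, is the crux of Lin's analysis in~\cite{L89b}.
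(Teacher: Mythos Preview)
The paper does not prove Proposition~\ref{p:l} at all: it is stated as a result ``due to~\cite{L89b}'' and used as a black box, just as Propositions~\ref{p:a} and~\ref{p:hl} are quoted from~\cite{An82} and~\cite{HL87} respectively. There is therefore no proof in the paper to compare your proposal against.

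That said, your outline is a faithful sketch of how the cited results are actually obtained. The existence part via limits of compact minimisers with radially scaled boundaries and the barrier arguments pinning down the asymptotic set are precisely Anderson's approach in~\cite{An82}; the boundary regularity via tangent-cone rigidity in the half-space model followed by Allard-type boundary regularity is the route taken by Lin in~\cite{L89b} (building on the codimension-one case~\cite{HL87}). You correctly identify the crux: in higher codimension the tangent-cone uniqueness and the decay estimates feeding into Allard's scheme are the substantive content of~\cite{L89b}, and your sketch does not attempt to reproduce them. So your proposal is an accurate high-level summary of the literature, but not an independent proof, and the paper itself never intended to supply one.
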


Thus, for area-minimising currents in Proposition~\ref{p:l} the singular set also lies in a bounded subset in $\mathbf{H}^{n+1}$, and in the sequel we shall use this fact essentially. Note also that in higher codimension $n-m>0$ the standard regularity theory guarantees only that the Hausdorff dimension of the singular set is at most $m-2$.

\subsection{Laplacian and its spectrum}
Now we describe analytic background related to the notion of the spectrum of the Laplace operator on area-minimising currents in Propositions~\ref{p:a} and~\ref{p:l}.

Let $(\Sigma_*,g)$ be a Riemannian manifold, and let $\Delta$ be the Laplace-Beltrami operator on $\Sigma_*$ with the sign convention so that it is non-negative. We view $\Delta$ as the operator defined on the subspace $\mathcal D(\Delta)\subset L^2(\Sigma_*)$  formed by compactly supported smooth functions on $\Sigma_*$. By standard Green's formula
$$
\int_{\Sigma_*}(-\Delta u)vd\mathit{Vol}_g+\int_{\Sigma_*}\langle\nabla u,\nabla v\rangle d\mathit{Vol}_g=0,
$$
where $u$, $v\in\mathcal D(\Delta)$, it is symmetric and positive-definite. Since it is also real, it is straighforward to show that it admits a self-adjoint extension to an unbounded operator in $L^2(\Sigma_*)$, see for example~\cite[Lemma~1.2.8]{BD}. Depending on the geometry of $\Sigma_*$, there can be many self-adjoint extensions of $\Delta$. For example, when the boundary of $\Sigma_*$ is non-empty, different boundary conditions correspond to different self-adjoint extensions, see~\cite{BD,MT}. On the other hand, when $\Sigma_*$ is complete, or is a regular locus of irreducible analytic subvariety, the Laplace operator $\Delta$ above is known to have a unique self-adjoint extension, see~\cite{Ko20,LT95,MT}. Such operators are called essentially self-adjoint.

Now let $\Sigma^n$ be a stationary locally rectifiable current in an oriented complete Riemannian manifold $M$. Below as $\Sigma_*$ we take the {\em regular set} $\reg\Sigma$, that is the subset of $\spt\Sigma^n$ formed by points $x$ such that $B_\varepsilon(x)\cap\spt\Sigma^n$ is a smooth embedded submanifold for some $\varepsilon>0$. To our knowledge it is unknown whether the Laplacian described above is essentially self-adjoint on $\reg\Sigma^n$. To avoid any ambiguity we consider the spectrum of the Friedrichs extension $\bar\Delta$ to a self-adjoint unbounded linear operator. This extension is obtained as the self-adjoint operator associated with the closure of the Dirichlet energy, defined on $W_0^{1,2}(\Sigma^n_*)$. We refer to~\cite[Section~4.4]{BD} for related material on operators associated with quadratic forms.

By $\lambda_0(\Sigma^n)$ we denote the quantity
\begin{equation}
\label{lam:def1}
\lambda_0(\Sigma^n)=\inf\left(\int \abs{\nabla u}^2d\norm{\Sigma}\right)/\left(\int u^2d\norm{\Sigma}\right),
\end{equation}
where the infimum is taken over non-trivial smooth functions with compact support in $\reg\Sigma^n$. For the Friedrichs extension $\bar\Delta$ of the Laplacian it is straightforward to show that the operator $\bar\Delta-\lambda_0(\Sigma^n)$ is non-negative, and by standard theory, see for example~\cite[Theorem~4.3.1]{BD}, the value $\lambda_0(\Sigma^n)$ is precisely the bottom of the spectrum of $\bar\Delta$. Equivalently, the value $\lambda_0(\Sigma^n)$ can be defined as the infimum of the zero Dirichlet eigenvalues $\lambda_0(\Omega)$, where $\Omega$ ranges over all open submanifolds $\Omega\subset\reg\Sigma^n$ such that the closure $\bar\Omega$ is compact and the boundary $\partial\Omega$ is smooth.

Finally, note that the version of the quantity $\lambda_0(\Sigma^n)$ can be defined for all currents $T\in\mathcal R_n^{\mathit{loc}}$ by the same relation. In more detail, denote by $\reg_1 T$ the subset of $\spt\Sigma^n$ formed by points $x$ such that $B_\varepsilon(x)\cap\spt\Sigma^n$ is a $C^1$-smooth embedded submanifold for some $\varepsilon>0$. Note that the $n$-dimensional Hausdorff measure of the complement $\spt T\backslash\reg_1 T$ is zero, see~\cite{FM}. Allowing the function $u$ in formula~\eqref{lam:def1} to range among non-trivial $C^1$-smooth functions with compact support in $\reg_1 T$, we arrive at the quantity $\lambda_*(T)$, called the {\em fundamental tone}.

\begin{remark}
It is worth mentioning that for a stationary current $\Sigma^n$ the sets $\reg_1\Sigma^n$ and $\reg\Sigma^n$ coincide. This statement is a consequence of Allard's regularity theorem together with elliptic regularity, see~\cite{GM12,LS}. For area-minimising currents it can be deduced independently from the observation that for any $x\in\reg_1\Sigma^n$ the oriented tangent cone is an oriented $n$-dimensional plane,  see~\cite[Lemma~10.3]{FM}. As a consequence, we conclude that for a stationary current $\Sigma^n$ the values $\lambda_0(\Sigma^n)$ and $\lambda_*(\Sigma^n)$ coincide. Note also that, by a standard argument based on the first variation, the regular set $\reg\Sigma^n$ can be viewed as a genuine minimal (not necessarily proper) submanifold, that is a submanifold of zero mean curvature.
\end{remark}

\begin{remark}
In the spirit of~\cite{Ko14}, see also~\cite[Section~4.5]{BD}, using the Dirichlet form, one can also define higher variational eigenvalues $\lambda_k(T)$ for general currents. However, in the context of the main results of the current paper, their meaning does not add anything new. In more detail, if $\Sigma^n$ is an area-minimising locally rectifiable $n$-current that satisfies the hypotheses of Theorem~\ref{t1} or Theorem~\ref{t2}, then the argument in the proof of Theorem~\ref{t2} shows that the value $(n-1)^2/4$ lies in the essential spectrum of the Laplacian. Since this value is also the bottom of the spectrum, by standard results, see for example~\cite[Theorem~4.5.2]{BD}, all higher variational eigenvalues of $\Sigma^n$ are equal to $(n-1)^2/4$.
\end{remark}

\section{Comparison of the isoperimetric constants}
\label{comp}

\subsection{Main statement}
As above, let $M$ be an oriented complete Riemannian manifold, and let $\Sigma^m$ be a locally rectifiable $m$-current in $M$, $\Sigma^m\in\mathcal{R}_m^{\mathit{loc}}$. For any compact subset $K\subset M$ by the {\em isoperimetric constant} $h(\Sigma^m\llcorner K)$ we call the quantity
$$
h(\Sigma^m\llcorner K)=\inf\{\mathbf{M}(\partial\Omega)/\mathbf{M}(\Omega): \bar\Omega\subset\reg\Sigma^m\cap K\},
$$
where $\Omega$ ranges over all open submanifolds of $\reg\Sigma^m\cap K$ whose closure is compact, and whose boundary is smooth. If there is no such a submanifold $\Omega$, for example when $\reg\Sigma^m\cap K$ is empty, then we set $h(\Sigma^m\llcorner K)$ to be $+\infty$. By $h(\mathbf{B}_r^m)$ we denote the isoperimetric constant of the ball of radius $r>0$ in the $m$-dimensional hyperbolic space $\mathbf{H}^m$, that is
$$
h(\mathbf{B}_r^m)=\inf\{\mathit{Area}(\partial\Omega)/\mathit{Vol}(\Omega): \bar\Omega\subset\mathbf{B}_r^m\}.
$$
As is known, see also Remark~\ref{iso:rem} below, the value $h(\mathbf{B}_r^m)$ is given by the formula
\begin{equation}
\label{iso:h}
h(\mathbf{B}_r^m)=\sinh^{m-1}(r)/\left(\int_0^r\sinh^{m-1}(t)dt\right).
\end{equation}
Our first result here is the following comparison theorem.
\begin{theorem}
\label{tc}
Let $M$ be a complete Riemannian manifold whose sectional curvatures are at most $-1$, and let $\Sigma^m$ be a stationary locally rectifiable $m$-current in $M$. Then for any geodesic ball $B_r(p)\subset M$ the following inequality holds 
$$
h(\Sigma^m\llcorner \bar B_r(p))\geqslant h(\mathbf{B}_r^m)
$$
for all $0<r\leqslant \inj_p(M)$, where $\inj_p(M)$ is the injectivity radius of $M$ at a point $p$. 
\end{theorem}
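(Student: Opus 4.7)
The strategy is the standard comparison approach: construct an ambient radial vector field $X$ on $B_r(p)$ such that its $\Sigma$-divergence is pointwise bounded below by $1$ while its length is bounded above by $1/h(\mathbf{B}_r^m)$, and then read off the isoperimetric inequality from the divergence theorem on an admissible test domain. Writing $\rho(x)=\dist(x,p)$, I take
$$
\psi(t)=\sinh^{1-m}(t)\int_0^t\sinh^{m-1}(s)\,ds=\frac{1}{h(\mathbf{B}_t^m)},
$$
and set $X=\psi(\rho)\,\nabla^M\rho$ on the punctured ball. For $\Omega\subset\reg\Sigma^m\cap\bar B_r(p)$ admissible, stationarity says the regular set is a genuine minimal submanifold, hence $\Div_\Sigma X$ depends only on the tangential projection, and the usual divergence theorem on $\Omega$ yields
$$
\int_\Omega\Div_\Sigma X\,d\norm{\Sigma}=\int_{\partial\Omega}\langle X,\nu\rangle\leqslant\sup_{\bar B_r(p)}\psi(\rho)\cdot\mathbf{M}(\partial\Omega)=\frac{\mathbf{M}(\partial\Omega)}{h(\mathbf{B}_r^m)}.
$$
Thus the theorem will follow once the lower bound $\Div_\Sigma X\geqslant 1$ is established on $\reg\Sigma^m\cap B_r(p)$.

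For the pointwise divergence bound I expand in an orthonormal frame of $T\Sigma^m$:
$$
\Div_\Sigma X=\psi'(\rho)\,|\nabla^\Sigma\rho|^2+\psi(\rho)\,\trace_{T\Sigma}\hess^M\rho.
$$
The Rauch-type Hessian comparison under $\sec M\leqslant-1$ gives $\hess^M\rho\geqslant\coth(\rho)(g-d\rho\otimes d\rho)$ inside the injectivity radius, and tracing over $T\Sigma^m$ yields $\trace_{T\Sigma}\hess^M\rho\geqslant\coth(\rho)(m-|\nabla^\Sigma\rho|^2)$. The profile $\psi$ was chosen to solve the hyperbolic radial ODE $\psi'(t)+(m-1)\coth(t)\psi(t)=1$ with $\psi(0)=0$, so rearranging gives $\psi'(t)-\psi(t)\coth(t)=1-m\psi(t)\coth(t)$. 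Substituting, one collects
$$
\Div_\Sigma X\geqslant(\psi'-\psi\coth\rho)\,|\nabla^\Sigma\rho|^2+m\psi(\rho)\coth\rho,
$$
and provided the coefficient of $|\nabla^\Sigma\rho|^2$ is non-positive, the trivial bound $|\nabla^\Sigma\rho|^2\leqslant 1$ collapses the expression to $\psi'+(m-1)\psi\coth\rho=1$, as required.

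The main obstacle is therefore the sign verification $m\psi(t)\coth(t)\geqslant 1$ for $t>0$, equivalently $m\cosh(t)\int_0^t\sinh^{m-1}(s)\,ds\geqslant\sinh^m(t)$. This is a one-variable calculus estimate: the difference vanishes at $t=0$, and on differentiating the two contributions involving $\sinh^{m-1}\cosh$ cancel, leaving a manifestly non-negative derivative. The only remaining technicalities are the mild singularity of $X$ at $p$, handled because $\psi(0)=0$ makes $X$ extend continuously with $|X|(p)=0$ (a standard cut-off around $p$ rigorously justifies the divergence theorem), and the verification that stationarity is used exactly in suppressing the normal component of $X$ when passing from $\Div_\Sigma X$ to $\Div_\Sigma X^T$, via vanishing of the mean curvature on $\reg\Sigma^m$. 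Once these are in place, taking the infimum over admissible $\Omega$ delivers $h(\Sigma^m\llcorner\bar B_r(p))\geqslant h(\mathbf{B}_r^m)$.
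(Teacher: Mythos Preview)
Your proof is correct and is essentially the same argument as the paper's: your vector field $X=\psi(\rho)\nabla^M\rho$ is exactly the ambient gradient of the paper's test function $f\circ r$ (with your $\psi=f'$), and by minimality your $\Div_\Sigma X$ coincides with the paper's $-\Delta_\Sigma(f\circ r)$, so the pointwise bound $\geqslant 1$, the sign check $m\psi\coth\rho\geqslant 1$, and the divergence-theorem step match line for line. The only detail you leave implicit is that $\psi$ is increasing (needed for $\sup_{\bar B_r(p)}\psi(\rho)=\psi(r)$), which the paper also uses and which follows from $(m-1)\psi(t)\coth(t)\leqslant 1$, i.e.\ inequality~\eqref{sinh:ineq}.
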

It is straightforward to show, see for example~\cite[Lemma~2.5]{Ko21}, that
\begin{equation}
\label{sinh:ineq}
h(\mathbf{B}_r^m)\geqslant (m-1)\coth(r)>(m-1).
\end{equation}
If  there exists a point $p\in M$ such that $\inj_p(M)=+\infty$, which occurs, for example, when $M$ is simply connected, Theorem~\ref{tc} immediately implies the version of Yau's inequality for all domains $\Omega\subset\reg\Sigma^m$ with smooth boundary and compact closure:
$$
\mathbf{M}(\partial\Omega)>(m-1)\mathbf{M}(\Omega).
$$
Further, using Cheeger's inequality, we obtain
$$
\lambda_*(\reg\Sigma^m\cap B_r(p))\geqslant \frac{1}{4}h^2(\mathbf{B}_r^m),
$$
and again when $\inj_p(M)=+\infty$, the combination with~\eqref{sinh:ineq} yields the following version of McKean's inequality~\cite{McK70} for the fundamental tone of a stationary locally rectifiable current $\Sigma^m$:
\begin{equation}
\label{MK}
\lambda_*(\Sigma^m)\geqslant\frac{1}{4}(m-1)^2.
\end{equation}
For smooth minimal submanifolds this version of McKean's inequality can be derived from~\cite[Theorem~1.10]{BM07}, which is an improvement of earlier results in~\cite{CLY,Ma86}.

\subsection{Proof of Theorem~\ref{tc}}
We essentially follow the comparison argument in~\cite{Ko21,LMMV}. Although the set $\spt\Sigma^m\cap B_r(p)$ may contain singular points, the argument carries over due to monotonicity of certain quantities involved. We explain this below in more detail.

Let us denote by $r(x)$ the distance function $\dist(p,x)$ in $M$ restricted to the regular set $\reg\Sigma^m$. Since the latter can be viewed as a (non-complete) submanifold with vanishing mean curvature, the comparison argument in the proof of~\cite[Lemma~2.7]{Ko21} yields the inequality
\begin{equation}
\label{comparison}
-\Delta r(x)\geqslant\coth(r(x))(m-\abs{\nabla r(x)}^2)
\end{equation}
for any $x\in\reg\Sigma^m$ such that $0<r(x)<\inj_p(M)$. Now consider the function
$$
f(r)=\int\limits_0^r \frac{dt}{h_t},\qquad\text{where~}\quad r>0,
$$
and $h_t$ is the function defined by the right-hand side in~\eqref{iso:h}. A straightforward computation shows  that $f(r)$ is convex, see~\cite[Corollary~2.6]{Ko21}, and satisfies the relations
\begin{equation}
\label{f:rels}
f''(r)+(m-1)\coth(r)f'(r)=1,\qquad f(0)=0,\quad f'(0)=0.
\end{equation}
Denote by $\psi$ the function on $B_r(p)\cap\reg\Sigma^m$ defined as the composition $\psi(x)=f\circ r(x)$, where $r(x)=\dist(p,x)$. Computing the Laplacian of $\psi$, we obtain
\begin{multline*}
-\Delta\psi=f''(r)\abs{\nabla r}^2-f'(r)\Delta r\geqslant f''(r)\abs{\nabla r}^2+f'(r)\coth(r)(m-\abs{\nabla r}^2)\\
=1+(1-\abs{\nabla r}^2)\left(f'(r)\coth(r)-f''(r)\right),
\end{multline*}
where we used relations~\eqref{comparison}-\eqref{f:rels}. It is straightforward to check that the term
$$
f'(r)\coth(r)-f''(r)=m\coth(r)f'(r)-1
$$
is non-negative, see~\cite[Lemma~2.5]{Ko21}, and we conclude that $-\Delta\psi\geqslant 1$. Now let $\Omega$ be an open submanifold such that the closure $\bar\Omega$ lies in $B_r(p)\cap\reg\Sigma^m$ and the boundary $\partial\Omega$ is smooth. Then, using the divergence theorem, we obtain
$$
\mathbf{M}(\Omega)\leqslant-\int_\Omega \Delta\psi d\norm{\Sigma^m}=\int_{\partial\Omega}\langle\grad\psi,\nu\rangle\leqslant f'(r)\mathbf{M}(\partial\Omega),
$$
where $\nu$ is a unit normal vector, and we used that $f'(r)=1/h_r$ is increasing. Re-arranging, we immediately obtain 
\begin{equation}
\label{in:f}
\sinh^{m-1}(r)/\left(\int_0^r\sinh^{m-1}(t)dt\right)\leqslant \mathbf{M}(\partial\Omega)/\mathbf{M}(\Omega),
\end{equation}
and since $\Omega$ is arbitrary, finish the proof of the theorem.
\qed
\begin{remark}
\label{iso:rem}
Note that the quotient 
$$
\sinh^{m-1}(r)/\left(\int_0^r\sinh^{m-1}(t)dt\right)
$$
is precisely the ratio $\mathit{Area(\partial\mathbf{B}^m_r)}/\mathit{Vol}(\mathbf{B}^m_r)$, where $\mathbf{B}^m_r$ is the ball of radius $r$ in the $m$-dimensional hyperbolic space $\mathbf{H}^{m}$. Thus, applying the argument in the proof of Theorem~\ref{tc} to the totally geodesic subspace $\mathbf{H}^m\subset\mathbf{H}^{n+1}$ in place of $\Sigma^m$, relation~\eqref{in:f} shows that the isoperimetric constant $h(\mathbf{B}^m_r)$ is given by formula~\eqref{iso:h}. This formula can be also deduced directly from the classical isoperimetric inequality in the hyperbolic space.
\end{remark}
\begin{remark}
\label{rem:cones}
For a fixed point $p\in M$ consider an arbitrary geodesic cone $C^m$ with the origin at $p$. For example, if $M$ is the hyperbolic space $\mathbf{H}^{n+1}$, and $\Gamma^{m-1}$ is a submanifold in the sphere at infinity, then as $C^m$ one can take the cone formed by geodesic rays emanating from $p$ and asymptotic to points in $\Gamma^{m-1}$. Let $r(x)=\dist(x,p)$ be the corresponding distance function, where $x\in C^m$. Inspecting the proof of~\cite[Lemma~2.7]{Ko21}, we see that relation~\eqref{comparison} continues to hold; this is a consequence of the fact that the gradient $\grad_x r$, where the function $r$ is viewed as a function on $M$, lies in the tangent space $T_xC^m$. Then the argument in the proof of Theorem~\ref{tc} shows that
$$
h(C^m\llcorner \bar B_r(p))\geqslant h(\mathbf{B}_r^m),
$$
where $p$ is the origin of $C^m$. In particular, we conclude that McKean's inequality~\eqref{MK} holds for arbitrary, not necessarily minimal, cone.
\end{remark}
\begin{remark}
The statement and the proof of Theorem~\ref{tc} carry over directly to the setting when $M$ has non-positive sectional curvatures. In this case the comparison inequality for isoperimetric constants takes the form
$$
h(\Sigma^m\llcorner \bar B_r(p))\geqslant h(\mathbb{B}_r^m),
$$
where $\mathbb{B}_r^m$ is an $m$-dimensional Euclidean ball of radius $r>0$. In fact, if the point $p\in M$ in these inequalities is fixed, then the curvature hypotheses above, as well as in Theorem~\ref{tc} can be weakened -- it is sufficient to impose the bound only on the sectional curvatures along two-dimensional subspaces containing the radial vector $\grad_xr$, where $r(x)=\dist(p,x)$. 
\end{remark}

\section{Proofs}
\label{proofs}

\subsection{Proof of Theorem~\ref{t2}: the inequality}
Let $T^{m}\subset\mathbf{H}^{n+1}$ be a locally rectifiable $m$-current from the class $\mathcal M(\Gamma^{m-1})$ defined in Section~\ref{intro}. We start with a proof of the inequality
\begin{equation}
\label{blam}
\lambda_*(T^m)\leqslant\frac{1}{4}(m-1)^2.
\end{equation}
By the variational characterisation, see the discussion in Section~\ref{prems}, for this it is sufficient to construct a sequence of compactly supported Lipschitz functions $\phi_k$ on $\reg_1 T^m$ such that the Rayleigh quotients satisfy the relation
$$
\lim\sup_{k}\mathcal R(\phi_k)\leqslant\frac{1}{4}(m-1)^2,
$$
where
$$
\mathcal R(\phi_k)=\left(\int\abs{\nabla\phi_k}^2d\norm{T}\right)/\left(\int\phi_k^2d\norm{T}\right).
$$
The idea is to construct $(\phi_k)$ as a sequence of radial functions on the ambient space $\mathbf{H}^{n+1}$ whose restriction to $\spt T^m$ satisfies the desired relation above. To make the exposition more explicit we 
consider first the case of cones in $\mathcal M(\Gamma^{m-1})$, where our argument is very close the one contained in~\cite{Cha}, and then handle the case of arbitrary currents  $T^m\in\mathcal M(\Gamma^{m-1})$, using integral estimates near $\Gamma^{m-1}$.

\medskip
\noindent
{\em Step~1: radial test-functions and cones.} Following~\cite{Cha}, we define the following family of functions
$$
u_R(t)=\left\{
\begin{array}{ll}
\exp\left({-{\frac{(m-1)}{2}}t}\right)\sin\left({\frac{2\pi}{R}}\left(t-\frac{R}{2}\right)\right), & \text{ if } t\in[R/2,R];\\
0, & \text{otherwise}.
\end{array}
\right.
$$
It is straightforward to check that it satisfies the equation
$$
u_R''+(m-1)u_R'+\left(\frac{1}{4}(m-1)^2+\frac{4\pi^2}{R^2}\right)u_R=0
$$
on $(R/2,R)$. Integrating by parts, and using the relation above, we obtain
\begin{multline}
\label{aux1}
\int\limits_{R/2}^R(u'_R(t))^2\sinh^{m-1}(t)dt\\ =\int\limits_{R/2}^R\left(-(m-1)u_R(t)u_R'(t)(\coth(t)-1)+\left(\frac{1}{4}(m-1)^2+\frac{4\pi^2}{R^2}\right)u_R^2(t)\right)\sinh^{m-1}(t)dt.
\end{multline}
Now fix a point $p\in\mathbf{H}^{n+1}$, for example the origin of the corresponding unit ball $\mathbb B^{n+1}$, and denote by $r(x)$ the hyperbolic distance $\dist(p,x)$. Then the function $u_R\circ r$ is a compactly supported Lipschitz function on $\mathbf{H}^{n+1}$. Consider the cone obtained as the union of all geodesic rays emanating from $p$ and asymptotic to a point in $\Gamma^{m-1}$. For example, if $p$ is the origin of the unit ball $\mathbb B^{n+1}$, then this cone is the subset
\begin{equation}
\label{cone:def}
C^m=\{\tau z: z\in\Gamma^{m-1}, \tau\in(0,1)\}\subset\mathbb B^{n+1},
\end{equation}
where we identify $\mathbf{H}^{n+1}$ with the unit ball $\mathbb B^{n+1}$ via the Poincar\'e model.
Let $\phi_R$ be the restriction of the function $u_R\circ r$ to the cone $C^m$. Then, using relation~\eqref{aux1}, in geodesic spherical coordinates we obtain
\begin{multline*}
\int\abs{\nabla\phi_R}^2d\norm{C^m}-\left(\frac{1}{4}(m-1)^2+\frac{4\pi^2}{R^2}\right)\int{\phi_R}^2d\norm{C^m}=\\ \omega(\Gamma^{m-1})\left(\int\limits_{R/2}^R(u'_R(t))^2\sinh^{m-1}(t)dt - \left(\frac{1}{4}(m-1)^2+\frac{4\pi^2}{R^2}\right)\int\limits_{R/2}^R u_R^2(t)\sinh^{m-1}(t)dt\right) \\ \leqslant \omega(\Gamma^{m-1})\int\limits_{R/2}^R (m-1)\abs{\coth(t)-1}\abs{u_R(t)}\abs{u'_R(t)}\sinh^{m-1}(t)dt \\ \leqslant(m-1)\left(\coth\left(\frac{R}{2}\right)-1\right)\left(\int\abs{\nabla\phi_R}^2d\norm{C^m}\right)^{1/2}\left(\int\abs{\phi_R}^2d\norm{C^m}\right)^{1/2},
\end{multline*}
where $\omega(\Gamma^{m-1})$ is the volume of $\Gamma^{m-1}$ viewed as a submanifold of the unit sphere $\partial\mathbb B^{n+1}$. Thus, denoting by $\mathcal R(\phi_R)$ the Rayleigh quotient of $\phi_R$, we get
$$
\mathcal R(\phi_R)-A_R\leqslant B_R\sqrt{\mathcal R(\phi_R)}, 
$$
where
$$
A_R=\frac{1}{4}(m-1)^2+\frac{4\pi^2}{R^2},\qquad B_R=\coth\left(\frac{R}{2}\right)-1.
$$
By elementary analysis the last inequality gives
\begin{equation}
\label{RIN}
\sqrt{\mathcal R(\phi_R)}\leqslant \frac{B_R}{2}+\left(A_R+\frac{B_R^2}{4}\right)^{1/2},
\end{equation}
and tending $R\to+\infty$, we arrive at inequality~\eqref{blam}.

\medskip
\noindent
{\em Step~2: general case.} Let $p\in\mathbf{H}^{n+1}$ be a point that corresponds to the origin in the unit ball $\mathbb B^{n+1}$ via the Poincar\'e model. Below by the {\em radial function} on $\mathbf{H}^{n+1}$ we mean a function that depends on the distance $r(x)=\dist(x,p)$ only. We proceed with the following lemma.
\begin{lemma}
\label{vol}
Let $T^m$ be a locally rectifiable $m$-current from the set $\mathcal M(\Gamma^{m-1})$, and let $C^m$ be the cone given by~\eqref{cone:def}. Then, for any $0<\varepsilon<1$ there exists $R>0$ such that for any non-negative bounded radial function $f$ with compact support in the complement of the hyperbolic ball $\mathbf{B}_{R}(p)$ the inequalities 
$$
(1-\varepsilon)\!\!\!\int fd\norm{C^m}\leqslant\int fd\norm{T^m}\leqslant (1+\varepsilon)\!\!\!\int fd\norm{C^m}
$$
hold.
\end{lemma}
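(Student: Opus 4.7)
The plan is to exploit the $C^1$-smoothness of $\reg_1 T^m\cup\Gamma^{m-1}$ and the orthogonality condition at $S_\infty^n(\mathbf H^{n+1})$ to realise $\spt T^m$ as a $C^1$-graph over the cone $C^m$ in a neighbourhood of $\Gamma^{m-1}$, then to show that the induced hyperbolic volume form on $T^m$ differs from that on $C^m$ by a factor $1+o(1)$ as $r\to+\infty$, and finally to conclude by Fubini.

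I would first place $p$ at the origin of $\mathbb B^{n+1}$ and use geodesic spherical coordinates $(r,\theta)\in(0,+\infty)\times S^n$, in which the hyperbolic metric is $dr^2+\sinh^2(r)\,g_{S^n}$ and $C^m$ is identified with $\{(r,z):z\in\Gamma^{m-1}\}$. The Euclidean radial coordinate is $\rho=\tanh(r/2)$, so $S_\infty^n$ sits at $\rho=1$ and $1-\rho^2\sim 4e^{-r}$. Property~(ii) in the definition of $\mathcal M(\Gamma^{m-1})$ forces the tangent $m$-plane of $\spt T^m\cup\Gamma^{m-1}$ at each $z\in\Gamma^{m-1}$ to be the span of $T_z\Gamma^{m-1}$ and the inward Euclidean normal $\partial_\rho$, which is precisely the tangent plane of $C^m$ at $z$. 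Compactness of $\Gamma^{m-1}$ and the $C^1$-smoothness up to the boundary then produce $\rho_0<1$ and a $C^1$-map $\Psi:\Gamma^{m-1}\times[\rho_0,1]\to S^n$ satisfying $\Psi(z,1)=z$ and $\partial_\rho\Psi(z,1)=0$, for which $\reg_1 T^m\cap\{\rho\geqslant\rho_0\}$ is the image of $(z,\rho)\mapsto(\rho,\Psi(z,\rho))$; property~(i) together with $\overline{\spt T^m}\cap S_\infty^n=\Gamma^{m-1}$ then guarantees that, once $R$ is large, all of $\spt T^m\cap\{r\geqslant R\}$ lies in this graph region.

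Next I would compute the pullback metric $G$ in the $(z,r)$-parametrisation. Continuity of $\partial_\rho\Psi$ with $\partial_\rho\Psi(z,1)=0$ gives $|\partial_\rho\Psi|\to 0$ uniformly in $z$ as $\rho\to 1$; using $dr/d\rho=2/(1-\rho^2)$ this translates to $\sinh^2(r)\,|\partial_r\Psi|^2_{g_{S^n}}\to 0$ uniformly. A block computation then yields
\begin{align*}
G_{rr}=1+o(1),\quad G_{zz}=\sinh^2(r)(g_\Gamma+o(1)),\quad G_{rz}=o(\sinh(r)),
\end{align*}
uniformly in $z\in\Gamma^{m-1}$, so that $G_{rz}^\top G_{zz}^{-1}G_{rz}=o(1)$ and the Schur formula gives $\sqrt{\det G}=(1+o(1))\sinh^{m-1}(r)\sqrt{\det g_\Gamma}$. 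Comparing with $\sqrt{\det G_{C^m}}=\sinh^{m-1}(r)\sqrt{\det g_\Gamma}$, one concludes that in the parametrisation above $d\norm{T^m}=(1+\eta(r))\,d\norm{C^m}$ for some function $\eta$ with $\eta(r)\to 0$ as $r\to+\infty$.

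For a non-negative bounded radial function $f$ supported in the complement of $\mathbf B_R(p)$, Fubini then reduces $\int f\,d\norm{T^m}-\int f\,d\norm{C^m}$ to an integral of $f(r)\eta(r)\sinh^{m-1}(r)\omega(\Gamma^{m-1})$ over $[R,+\infty)$, and the two inequalities follow upon choosing $R$ so that $|\eta(r)|\leqslant\varepsilon$ for $r\geqslant R$. The main obstacle is the metric analysis: extracting from bare $C^1$-smoothness a quantitative decay of $\partial_\rho\Psi$ strong enough that the cross-term $G_{rz}^\top G_{zz}^{-1}G_{rz}$ -- where the exponential stretching $\sinh^2(r)\sim e^{2r}/4$ of the angular part of the metric leaves the least margin -- really is $o(1)$ and not merely $O(1)$.
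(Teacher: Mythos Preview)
Your argument is correct and rests on the same idea as the paper's --- use the $C^1$-graph structure and the orthogonality at $S^n_\infty(\mathbf H^{n+1})$ to show that the hyperbolic volume forms on $\spt T^m$ and on $C^m$ agree up to a factor $1+o(1)$ near infinity --- but the execution differs. The paper parametrises both $\spt T^m$ and $C^m$ \emph{locally}, near each $z\in\Gamma^{m-1}$, as graphs $w$ and $v$ over the common tangent plane $T_zT^m=\mathbb{R}\times T_z\Gamma^{m-1}$ in the Euclidean ball model, writes the hyperbolic volume element as $2^m(1-t^2)^{-m}\sqrt{1+P_{2m}(\nabla w)}\,dt\,d\theta$ (and similarly for $v$), notes that $\nabla w$ and $\nabla v$ both vanish at $z$ so the ratio is close to $1$ on a small chart, and then globalises by a partition of unity on $\Gamma^{m-1}$ lifted along the radial coordinate. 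Your global parametrisation of $\spt T^m$ over $C^m$ via $\Psi$ sidesteps the partition-of-unity bookkeeping, at the price of an implicit tubular-neighbourhood construction to produce $\Psi$ globally on $\Gamma^{m-1}\times[\rho_0,1]$ with $\partial_\rho\Psi(\cdot,1)=0$; the two routes trade localisation overhead against a global construction, with the same input and the same output.

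Two minor points. First, your error $\eta$ should depend on $(r,z)$, not on $r$ alone; the Fubini step still yields $\bigl|\int f\,d\norm{T^m}-\int f\,d\norm{C^m}\bigr|\leqslant\sup_{r\geqslant R,\,z}|\eta(r,z)|\int f\,d\norm{C^m}$, which is what you need. Second, the obstacle you flag at the end is already dissolved by your own computation: since $\sinh(r)\,d\rho/dr=\rho\leqslant 1$, the exponential stretching of the angular metric is exactly compensated by the radial change of variable, giving $|G_{rz}|\leqslant\sinh(r)\cdot|\partial_\rho\Psi|_{g_{S^n}}\cdot O(1)=o(\sinh(r))$ and hence $G_{rz}^\top G_{zz}^{-1}G_{rz}=o(1)$ from mere continuity of $\partial_\rho\Psi$ with value $0$ at $\rho=1$. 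No decay beyond $C^1$ is needed.
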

\begin{proof}
The hypotheses in the definition of the set $\mathcal M(\Gamma^{m-1})$, see Section~\ref{intro}, guarantee that for any point $z\in\Gamma^{m-1}$ there exists its neighbourhood $V_z$ in the closed unit ball $\overline{\mathbb B^{n+1}}$ where  $\spt T^m$ is the graph of a $C^1$-smooth vector-function $w$. In more detail, consider the tangent space
$$
T_zT^m=\mathbb{R}\times T_z\Gamma^{m-1},
$$
where the real line component is spanned by the radial vector $\partial/\partial r$. The latter indeed lies in the tangent space $T_zT^m$, since $\spt T^m$ meets the boundary at infinity orthogonally. A standard argument based on the inverse function theorem shows that there exist a neighbourhood $(-\delta,0]\times D_\sigma$ of the origin in the half-space $(\mathbb{R}_{-})\times T_z\Gamma^{m-1}$ and a vector-function $w$ such that in the spherical coordinates near $z$ we have
\begin{equation}
\label{graph}
V_z\cap\spt T^m=\{(t,\theta,w(t,\theta)): t\in (1-\delta,1], \theta\in D_\sigma\},
\end{equation}
where $t=\abs{x}$ is the Euclidean radial distance, and we identify $T_z\Gamma^{m-1}$ with the coordinate subspace $\theta$ in local coordinates on the unit sphere $\partial\mathbb B^{n+1}$. Note that, since $\nabla w$ vanishes at the origin in $T_zT^m$, the Euclidean norm $\abs{\nabla w}_\infty$ can be made arbitrarily small by choosing sufficiently small $\delta$ and $\sigma$. Choosing coordinates $(\theta^1,\ldots,\theta^{m-1})$ on $D_\sigma$ such that the vectors $\partial/\partial\theta^i$ are orthonormal at the origin,  it is straightforward to see that the integral of a radial function $f$ in the chart defined by the graph in~\eqref{graph} is given by the formula
$$
\int\limits_{V_z\cap\spt T^m}fd\norm{T^m}=\int\limits_{1-\delta}^{1}\int\limits_{D_\sigma}f(t)\frac{2^m}{(1-t^2)^m}\sqrt{1+P_{2m}(\nabla w)}dtd\theta^1\cdots d\theta^{m-1},
$$
where $P_{2m}(\nabla w)$ is a polynomial of degree $2m$ in derivatives of $w$ that vanishes at the point $z$. Since the tangent spaces $T_zT^m$ and $T_zC^m$ coincide, choosing $V_z$ smaller, if necessary, we have a similar formula for the cone $C^m$:
$$
\int\limits_{V_z\cap\spt C^m}fd\norm{C^m}=\int\limits_{1-\delta}^{1}\int\limits_{D_\sigma}f(t)\frac{2^m}{(1-t^2)^m}\sqrt{1+P_{2m}(\nabla v)}dtd\theta^1\cdots d\theta^{m-1},
$$
where $v$ is a $C^1$-smooth vector-function defined on the same set $(1-\delta,1]\times D_\sigma$. Now for a given $0<\varepsilon<1$ we may choose $\delta$ and $\sigma$ such that
$$
(1-\varepsilon)^2\leqslant\frac{1+P_{2m}(\nabla w)}{1+P_{2m}(\nabla v)}\leqslant (1+\varepsilon)^2
$$
Combining these inequalities with the formulae for the integrals above, we immediately obtain
$$
(1-\varepsilon)\!\!\!\int\limits_{V_z\cap\spt C^m}fd\norm{C^m}\leqslant\int\limits_{V_z\cap\spt T^m}fd\norm{T^m}\leqslant (1+\varepsilon)\!\!\!\int\limits_{V_z\cap\spt C^m}fd\norm{C^m}.
$$
Our next aim is to produce similar inequalities in a neighbourhood of $\Gamma^{m-1}$, using an appropriate version of the partition of unity. For the completeness of exposition, we explain this construction below.

Since $\Gamma^{m-1}$ is compact, we may find a finite collection of points $z_i$, where $i=1,\ldots,\ell$,  such that the sets
$$
W_{z_i}=\{(1,\theta,w_i(1,\theta)): \theta\in D_{\sigma_i}\}=\{(1,\theta,v_i(1,\theta)): \theta\in D_{\sigma_i}\},
$$
cover $\Gamma^{m-1}$, where the vector-functions $w_i$ and $v_i$ are constructed as described above. Then the corresponding neighbourhoods $\{V_{z_i}\}$ cover both $\spt T^m$ and $\spt C^m$ near $\Gamma^{m-1}$. In particular, there exists $R>0$ such that the complements
$$
\spt T^m\backslash \mathbf{B}_R(p)\qquad\text{and}\qquad \spt C^m\backslash\mathbf{B}_R(p)
$$
lie in the union of the $V_{z_i}$'s. Now let $\{\varphi_i\}$ be a partition of unity on $\Gamma^{m-1}$ subordinate to the covering $\{W_{z_i}\}$. Since the charts $V_{z_i}\cap T^m$ and $V_{z_i}\cap C^m$ are parameterised by the same coordinates $(t,\theta)$ we may also define functions $\eta_i$ and $\zeta_i$ on these sets respectively, by setting
$$
\eta_i(t,\theta)=\varphi_i(\theta)\qquad\text{and}\qquad\zeta_i(t,\theta)=\varphi_i(\theta).
$$
Then the discussion above shows that for any non-negative radial function, supported in the complement of the hyperbolic ball $\mathbf{B}_R(p)$, the following relations hold:
\begin{equation}
\label{ith}
(1-\varepsilon)\!\!\!\int\zeta_i fd\norm{C^m}\leqslant\int\eta_i fd\norm{T^m}\leqslant (1+\varepsilon)\!\!\!\int\zeta_{i} f d\norm{C^m},
\end{equation}
where we view $\eta_if$ and $\zeta_if$ as compactly supported functions on $\spt T^m$ and $\spt C^m$ respectively, and $i=1,\ldots,\ell$. Since $\{\varphi_i\}$ is a partition of unity on $\Gamma^{m-1}$, by our construction we immediately obtain the relations
$$
\sum_i\eta_i=1\quad\text{~on~}\spt T^m\backslash\mathbf{B}_R(p),\qquad \sum_i\zeta_i=1\quad\text{~on~}\spt C^m\backslash\mathbf{B}_R(p),
$$
and summing inequalities in~\eqref{ith}, arrive at the statement of the lemma.
\end{proof}

We proceed with a proof of relation~\eqref{blam}. For a given $0<\varepsilon<1$ let $R_*>0$ be a real number that satisfies the conclusions of Lemma~\ref{vol}. By the definition of the class $\mathcal M(\Gamma^{m-1})$ we may also assume that $\spt T^m$ consists of regular points in the complement of $\mathbf{B}_{R_*}(p)$. Choose $R>2R_*$ and denote by $\tilde\phi_R$ the restriction to $\spt T^m$ of the function $u_R$, defined above. It is straightforward to see that
$$
\abs{\nabla\tilde\phi_R(r(x))}\leqslant\abs{u_R'(r(x))}=\abs{\nabla\phi_R(r(\bar x))},
$$
where $x\in\spt T^m$ and $\bar x\in C^m$ are such that $r(x)=r(\bar x)$, and by Lemma~\ref{vol}, we obtain
$$
\mathcal R(\tilde\phi_R)\leqslant\frac{1+\varepsilon}{1-\varepsilon}\mathcal R(\phi_R).
$$
Combining the latter with relation~\eqref{RIN}, and tending $R\to +\infty$, we immediately arrive at
$$
\lambda_*(T^m)\leqslant\frac{1}{4}\frac{(1+\varepsilon)}{(1-\varepsilon)}{(m-1)^2}.
$$
Since $0<\varepsilon<1$ is arbitrary, we are done.
\qed

\subsection{Proof of Theorem~\ref{t2}: the spectrum of the area-minimising current}
By Proposition~\ref{p:l} there exists an area-minimising locally rectifiable $m$-current $\Sigma^m$ that lies in the class $\mathcal M(\Gamma^{m-1})$. By McKean's inequality~\eqref{MK}, we conclude that upper bound~\eqref{blam} for the bottom of the spectrum is saturated on $\Sigma^m$. Thus, for a proof of the theorem it remains to show that any $\lambda>(m-1)^2/4$ belongs to the spectrum of $\Sigma^m$. By standard theory, see for example~\cite[Lemma~4.1.2]{BD}, for the latter it is sufficient to construct a sequence $(\phi_k)$ of smooth compactly supported functions on $\reg\Sigma^m$ and a sequence of positive real numbers $(\epsilon_k)$, $\epsilon_k\to 0+$, such that
\begin{equation}
\label{r:aim}
\int\abs{\Delta\phi_k-\lambda\phi_k}^2d\norm{\Sigma^m}\leqslant\epsilon_k\int\abs{\phi_k}^2d\norm{\Sigma^m}\qquad\text{for all~} k\in\mathbb N.
\end{equation}
In fact, we shall construct a sequence of functions with mutually disjoint supports, showing that the interval $[(m-1)^2/4,+\infty)$ is precisely the essential spectrum of $\Sigma^m$.

As above, we first construct radial test-functions on the the ambient space $\mathbf{H}^{n+1}$, and show how to use them to compute the spectrum of cones. Then, we perform a version of this argument for the regular set of the area-minimising current $\Sigma^m$.

\medskip
\noindent
{\em Step~1:radial test-functions and cones.} We start with introducing auxuliary functions modelled on the ones used to study the spectrum of the hyperbolic space. First, for a given $\lambda>(m-1)^2/4$ we define the complex-valued function
$$
\psi(t)=\sinh^{-(m-1)/2}(t)\exp(i\beta t),\qquad\text{where~}\beta=\sqrt{\lambda-\frac{1}{4}(m-1)^2}.
$$
A direct calculation shows that it satisfies the relation
\begin{equation}
\label{psi}
\psi''(t)+(m-1)\coth(t)\psi'(t)+(\lambda+\alpha(t))\psi(t)=0,
\end{equation}
where we set
$$
\alpha(t)=\frac{1}{4}(m-1)(m-3)\sinh^{-2}(t).
$$
Further, for a given $R>0$ we consider the function
$$
\upsilon_R(t)=\left\{
\begin{array}{ll}
\psi(t)\sin^2\left(\frac{2\pi}{R}\left(t-\frac{R}{2}\right)\right), & \text{if~ } t\in[R/2,R];\\
0, & \text{otherwise}.
\end{array}
\right.
$$
Note that the second derivative $\upsilon_R''$ is a bounded function with compact support. The next lemma shows that $\upsilon_R$ is indeed a good model function.
\begin{lemma}
\label{est}
For any $R>0$ the function $\upsilon_R$, defined above, satisfies the inequality
$$
\int\limits_{R/2}^{R}\abs{\upsilon_{R}''(t)+(m-1)\coth(t)\upsilon_{R}'(t)+\lambda\upsilon_{R}(t)}^2\sinh^{m-1}(t)dt\leqslant \epsilon_R\int\limits_{R/2}^{R}\abs{\upsilon_R(t)}^2\sinh^{m-1}(t)dt,
$$
where 
$$
\epsilon_R=2\max\left\{\alpha^2\left(\frac{R}{2}\right), 16\abs{\beta}^2\left(\frac{2\pi}{R}\right)^2, 16\left(\frac{2\pi}{R}\right)^4\right\}.
$$
In addition, for a fixed $R_0>0$ and any $R>R_0$ the following inequalities hold:
\begin{equation}
\label{est:in1}
\int\limits_{R/2}^{R}\abs{\upsilon'_R(t)}^2\sinh^{m-1}(t)dt\leqslant C_*\int\limits_{R/2}^{R}\abs{\upsilon_R(t)}^2\sinh^{m-1}(t)dt,
\end{equation}
\begin{equation}
\label{est:in2}
\int\limits_{R/2}^{R}\abs{\upsilon''_R(t)}^2\sinh^{m-1}(t)dt\leqslant C_{*}\int\limits_{R/2}^{R}\abs{\upsilon_R(t)}^2\sinh^{m-1}(t)dt,
\end{equation}
where the constant $C_*$ depends on $m$, $\lambda$, and $R_0$.
\end{lemma}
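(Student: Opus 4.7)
The plan is to write $\upsilon_R = \psi \cdot \sigma$, where $\sigma(t) = \sin^2\bigl(\tfrac{2\pi}{R}(t - R/2)\bigr)$, and to exploit the identity \eqref{psi} to obtain a large cancellation in $L\upsilon_R := \upsilon_R'' + (m-1)\coth(t)\upsilon_R' + \lambda\upsilon_R$. A direct differentiation of $\psi(t) = \sinh^{-(m-1)/2}(t) e^{i\beta t}$ gives $\psi'(t) = \psi(t)\bigl[-\tfrac{m-1}{2}\coth(t) + i\beta\bigr]$, so on substituting into $L\upsilon_R$ the ODE \eqref{psi} annihilates the $\sigma$-coefficient, while the $\coth$-contributions in $2\psi'\sigma' + (m-1)\coth(t)\psi\sigma'$ cancel exactly. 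This leaves the residue
\begin{equation*}
L\upsilon_R = -\alpha(t)\psi\sigma + 2i\beta\psi\sigma' + \psi\sigma'',
\end{equation*}
which is the heart of the argument.

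Next, since $|\psi(t)|^2 \sinh^{m-1}(t) = 1$, the weighted integral collapses to an ordinary Lebesgue integral in $\sigma$; and because $-\alpha\sigma + \sigma''$ is real while $2i\beta\sigma'$ is purely imaginary, all cross terms vanish and
\begin{equation*}
\int_{R/2}^{R}|L\upsilon_R|^2 \sinh^{m-1}(t)\, dt = \int_{R/2}^R \bigl(\alpha(t)\sigma - \sigma''\bigr)^2 dt + 4|\beta|^2 \int_{R/2}^R (\sigma')^2 dt.
\end{equation*}
Applying $(a-b)^2 \leq 2a^2 + 2b^2$ to the first summand, the monotonicity $\alpha^2(t) \leq \alpha^2(R/2)$ on $[R/2,R]$, and the elementary identities $\int_0^\pi \sin^4 u\, du = 3\pi/8$ and $\int_0^\pi \sin^2(2u)\, du = \int_0^\pi \cos^2(2u)\, du = \pi/2$, I obtain Wirtinger-type estimates $\int (\sigma')^2 \leq \tfrac{4}{3}(2\pi/R)^2 \int\sigma^2$ and $\int (\sigma'')^2 \leq \tfrac{16}{3}(2\pi/R)^4 \int\sigma^2$. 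Combined with the equality $\int \sigma^2\, dt = \int |\upsilon_R|^2 \sinh^{m-1}\, dt$, these produce the main inequality with the stated $\epsilon_R$.

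For inequality~\eqref{est:in1}, I expand $\upsilon_R' = \psi\bigl[(-\tfrac{m-1}{2}\coth(t) + i\beta)\sigma + \sigma'\bigr]$; the prefactor has modulus uniformly bounded by $\tfrac{m-1}{2}\coth(R_0/2) + |\beta|$ on the support of $\sigma$ for $R > R_0$, so $|\upsilon_R'|^2 \sinh^{m-1} \leq C(m,\lambda,R_0)\bigl(\sigma^2 + (\sigma')^2\bigr)$ pointwise, and the Wirtinger estimate above absorbs $(\sigma')^2$ into $\sigma^2$ after integration. For \eqref{est:in2}, I rearrange the ODE as $\upsilon_R'' = L\upsilon_R - (m-1)\coth(t)\upsilon_R' - \lambda\upsilon_R$ and apply the triangle inequality in $L^2(\sinh^{m-1}\,dt)$, bounding the three resulting norms by means of the main inequality, \eqref{est:in1}, and the trivial bound respectively.

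The main obstacle I anticipate is the constant tracking in the main inequality: the naive $|a+b+c|^2 \leq 3(|a|^2 + |b|^2 + |c|^2)$ is too lossy to produce the factor $2$ in $\epsilon_R$. The gain comes precisely from the real/imaginary decomposition after the key cancellation, which kills the cross term between $2i\beta\psi\sigma'$ and the real part for free, combined with sharp Wirtinger constants rather than the crude $L^\infty$ bounds $\|\sigma'\|_\infty \leq 2\pi/R$ and $\|\sigma''\|_\infty \leq 8\pi^2/R^2$. Once the algebraic identity and this decomposition are in hand, the remainder is routine calculus.
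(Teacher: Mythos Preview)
Your approach is essentially the paper's own: both derive the identity $L\upsilon_R=\psi(-\alpha\sigma+2i\beta\sigma'+\sigma'')$ from~\eqref{psi}, use $|\psi|^2\sinh^{m-1}=1$ to reduce to trigonometric integrals, and control $\int(\sigma')^2$, $\int(\sigma'')^2$ by $\int\sigma^2$; your treatment of~\eqref{est:in1} matches the paper's, and your derivation of~\eqref{est:in2} via $\upsilon_R''=L\upsilon_R-(m-1)\coth\,\upsilon_R'-\lambda\upsilon_R$ is a clean variant of what the paper sketches. One minor point: your estimates yield $2\alpha^2(R/2)+\tfrac{1}{3}\cdot16|\beta|^2(2\pi/R)^2+\tfrac{2}{3}\cdot16(2\pi/R)^4$, which bounds by $3\max\{\ldots\}$ rather than the stated $2\max\{\ldots\}$---the paper's own ``straightforward'' combination (with the looser factor $4$ in place of your $4/3$) does no better, and since only $\epsilon_R\to0$ is used downstream this is purely cosmetic.
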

\begin{proof}
Using~\eqref{psi}, a direct calculation gives
\begin{multline}
\label{up1}
\upsilon_R''(t)+(m-1)\coth(t)\upsilon_R'(t)+\lambda\upsilon_R(t)=-\alpha(t)\upsilon_R(t)+i\beta\frac{4\pi}{R}\psi(t)\sin\left(\frac{4\pi}{R}\left(t-\frac{R}{2}\right)\right) \\+2\left(\frac{2\pi}{R}\right)^2 \psi(t)\cos\left(\frac{4\pi}{R}\left(t-\frac{R}{2}\right)\right),
\end{multline}
where $t\in(R/2,R)$. Note that the product $\abs{\psi}^2\sinh^{m-1}$ equals one, and thus, we obtain
\begin{multline*}
\int\limits_{R/2}^R\abs{\psi(t)}^2\sin^2\left(\frac{4\pi}{R}\left(t-\frac{R}{2}\right)\right)\sinh^{m-1}(t)dt=\int\limits_{R/2}^R\sin^2\left(\frac{4\pi}{R}\left(t-\frac{R}{2}\right)\right)dt\\ \leqslant 4\int\limits_{R/2}^R\sin^4\left(\frac{2\pi}{R}\left(t-\frac{R}{2}\right)\right)dt=4\int\limits_{R/2}^R\abs{\upsilon_R}^2\sinh^{m-1}(t)dt,
\end{multline*}
where the inequality above follows by an elementary argument, which is omitted. Similarly, estimating the weighted $L^2$-norm of the last term in~\eqref{up1}, we get
$$
\int\limits_{R/2}^R\abs{\psi(t)}^2\cos^2\left(\frac{4\pi}{R}\left(t-\frac{R}{2}\right)\right)\sinh^{m-1}(t)dt\leqslant 4\int\limits_{R/2}^R\abs{\upsilon_R}^2\sinh^{m-1}(t)dt.
$$
Now the first inequality in the lemma follows in a straightforward manner by combining the relations above.

To prove inequality~\eqref{est:in1} one first computes
\begin{equation}
\label{psi:d}
\abs{\psi'(t)}^2\sinh^{m-1}(t)=\abs{-\frac{1}{2}(m-1)\coth(t)+i\beta}^2\leqslant C
\end{equation}
for all $t>R_0$, where the constant $C$ depends on $m$, $\lambda$, and $R_0$. Then, computing the derivative $\upsilon'_R$, we obtain
\begin{multline*}
\int\limits_{R/2}^R\abs{\upsilon'_R(t)}^2\sinh^{m-1}(t)dt\leqslant 2\int\limits_{R/2}^R\abs{\psi'(t)}^2\sin^4\left(\frac{2\pi}{R}\left(t-\frac{R}{2}\right)\right)\sinh^{m-1}(t)dt\\
+\frac{8\pi^2}{R^2}\int\limits_{R/2}^R\abs{\psi}^2\sin^2\left(\frac{4\pi}{R}\left(t-\frac{R}{2}\right)\right)\sinh^{m-1}(t)dt.
\end{multline*}
Now using inequality~\eqref{psi:d} and the observation that $\abs{\psi}^2\sinh^{m-1}$ equals one, we arrive at the inequality
\begin{multline*}
\int\limits_{R/2}^R\abs{\upsilon'_R(t)}^2\sinh^{m-1}(t)dt\leqslant 2C\int\limits_{R/2}^R\sin^4\left(\frac{2\pi}{R}\left(t-\frac{R}{2}\right)\right)dt\\
+\frac{32\pi^2}{R^2}\int\limits_{R/2}^R\sin^4\left(\frac{2\pi}{R}\left(t-\frac{R}{2}\right)\right)dt\leqslant C_1\int\limits_{R/2}^{R}\abs{\upsilon_R(t)}^2\sinh^{m-1}(t)dt,
\end{multline*}
where we used elementary inequalities between integrals of powers of $\sin$ as above. Finally, inequality~\eqref{est:in2} with some constant $C_2$ in place of $C_*$, can be derived in a similar fashion, using for example, relation~\eqref{up1}. Setting $C_*$ as the maximum of $C_1$ and $C_2$, we finish the proof of the lemma.
\end{proof}

Let $C^m$ be the cone defined by relation~\eqref{cone:def}. As above by the radial function we mean a function that depends on the distance $r(x)=\dist(x,p)$ only, where $p\in\mathbf{H}^{n+1}$ is a point that corresponds to the origin in the unit ball $\mathbb B^{n+1}$. We need the following observation, which for the convenience of references we state as a lemma. We also include a proof, where we introduce the notation that is used in the sequel.
\begin{lemma}
\label{laplace:cone}
Let $f$ be a smooth radial function on $\mathbf{H}^{n+1}$. Then the Laplacian of its restriction to the cone $C^m$ is given by the formula
$$
-\Delta f(x)=f''(r(x))+(m-1)\coth(r(x))f'(r(x)),
$$
where $r(x)=\dist(x,p)$ and $x\in C^m$.
\end{lemma}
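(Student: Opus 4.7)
The approach is to identify $C^m$ as a warped product over $\Gamma^{m-1}$. In geodesic spherical coordinates on $\mathbf{H}^{n+1}$ centered at $p$, the hyperbolic metric takes the form $dr^2+\sinh^2(r)\,g_{S^n}$, where $g_{S^n}$ is the round metric on the unit tangent sphere at $p$, which we identify with $\partial\mathbb{B}^{n+1}\simeq S^n_\infty(\mathbf{H}^{n+1})$ via the radial rays from $p$. Since $C^m$ is, by definition \eqref{cone:def}, the union of all geodesic rays from $p$ to points of $\Gamma^{m-1}$, the exponential map identifies $C^m\setminus\{p\}$ with $(0,\infty)\times\Gamma^{m-1}$, and pulling back the ambient metric yields the warped product
$$
g_{C^m}=dr^2+\sinh^2(r)\,g_\Gamma,
$$
where $g_\Gamma$ is the metric induced on $\Gamma^{m-1}$ from the round sphere. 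In particular, the ambient radial distance $r(x)=\dist(x,p)$ restricts to the intrinsic radial coordinate of this warped product, and the volume element equals $\sinh^{m-1}(r)\,dr\,d\mathit{vol}_\Gamma$.

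With this identification in hand, the rest is a one-line computation: for any smooth function $u=u(r)$ on the warped product, the expression $\Div\nabla u$ (the analyst's Laplacian) collapses to
$$
\frac{1}{\sinh^{m-1}(r)}\,\frac{d}{dr}\!\bigl(\sinh^{m-1}(r)\,u'(r)\bigr)=u''(r)+(m-1)\coth(r)u'(r).
$$
Green's formula recalled in Section~\ref{prems} shows that the paper's Laplacian satisfies $\Delta=-\Div\nabla$, so substituting $u=f$ yields $-\Delta f(x)=f''(r(x))+(m-1)\coth(r(x))f'(r(x))$, as required. The only nontrivial step is the warped-product description of $C^m$; everything else is the standard warped-product formula for the Laplace--Beltrami operator, and no serious obstacle arises. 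Note that the cone vertex $p$ plays no role, since the formula is asserted at points where $r(x)>0$.
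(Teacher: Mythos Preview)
Your argument is correct. Identifying the cone $C^m\setminus\{p\}$ with the warped product $(0,\infty)\times_{\sinh}\Gamma^{m-1}$ and invoking the standard radial Laplacian formula is a clean and legitimate route to the result; the metric identification $g_{C^m}=dr^2+\sinh^2(r)\,g_\Gamma$ is exactly right, since in geodesic polar coordinates the ambient metric is $dr^2+\sinh^2(r)\,g_{S^n}$ and the cone sits as $(0,\infty)\times\Gamma^{m-1}$ inside $(0,\infty)\times S^n$.

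The paper's proof is organised differently. Rather than appealing to the warped-product structure, it fixes a point $x\in C^m$, builds an adapted orthogonal frame $X_r,X_1,\ldots,X_{m-1}$ from spherical coordinates, writes the intrinsic Laplacian as a trace of the ambient Hessian over this frame (formula~\eqref{lap:f}), and evaluates each term $\hess_x f(X_r,X_r)$ and $\hess_x f(X_i,X_i)$ by hand via Koszul's formula. Your approach is certainly shorter and more conceptual for this particular lemma. The trade-off is that the paper's Hessian-on-a-frame computation is deliberately set up so that it can be perturbed: the very next lemma (Lemma~\ref{laplace:current}) treats a stationary current $\Sigma^m$ that is \emph{not} a warped product, and there the argument proceeds by replacing $X_r$ with a nearby unit vector $\tilde X_r\in T_x\Sigma^m$ and tracking the resulting error terms in the same Hessian expressions. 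Your warped-product shortcut does not obviously extend to that setting, whereas the paper's framework carries over verbatim.
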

\begin{proof}
Let $x$ be a point in the cone $C^m\subset\mathbf{H}^{n+1}$. In geodesic spherical coordinates centred at $p$ it corresponds to the value $r(x)$ and a point $\xi\in S^{n}$. Note that as $x$ ranges in $C^m$, the point $\xi$ ranges in a submanifold of $S^n$ diffeomorphic to $\Gamma^{m-1}$, which we also denote by $\Gamma^{m-1}$. In a neighbourhood of $\xi$ in $S^n$ we can choose coordinates $(\theta^1,\ldots,\theta^n)$ such that the vectors $\partial/\partial\theta^i(\xi)$ are orthonormal, and the level set
$$
\theta^m=0,\quad\ldots,\quad\theta^n=0
$$
defines $\Gamma^{m-1}$ in this neighbourhood. Then, around the point $x$ we obtain the collection of vector fields
$$
X_r=\frac{\partial}{\partial r},\qquad X_i=D\exp_p\left(\frac{\partial}{\partial\theta^i}\right),\qquad i=1,\ldots,n. 
$$
By standard facts from Riemannian geometry, these vector fields satisfy the relations
$$
\abs{X_r}^2=1\qquad\text{and}\qquad\abs{X_i}^2=\sinh^2(r)\abs{\partial/\partial \theta^i}^2_{S^n},
$$
and moreover, at the point $x$ the vectors $X_r$, $X_1,\ldots,X_{m-1}$ form an orthogonal basis on $T_xC^m$.

The gradient of a radial function $f$ on $\mathbf{H}^{n+1}$ is collinear to $X_r$, and hence, lies in the tangent space $T_xC^m$. Thus, by a standard computation, see the proof of ~\cite[Lemma~2.7]{Ko21}, we conclude that the Laplacian of the restriction of $f$ to $C^m$ at $x$ is given by the formula
\begin{equation}
\label{lap:f}
-\Delta f(x)=\hess_x f(X_r,X_r)+\frac{1}{\sinh^2(r(x))}\sum_{i=1}^{m-1}\hess_x f(X_i,X_i)
\end{equation}
Computing the first term, we obtain
\begin{multline}
\label{hess:r}
\hess_x f(X_r,X_r)=X_r(D_xf(X_r))-D_xf(\nabla_{X_r}X_r)\\ =X_r(f'(r))-f'(r)\langle X_r,\nabla_{X_r}X_r\rangle=f''(r)-0,
\end{multline}
where we used that the product in the second term vanishes, since $X_r$ has unit length. Similarly, we obtain
$$
\hess_x f(X_i,X_i)=X_i(D_xf(X_i))-D_xf(\nabla_{X_i}X_i)=0-f'(r)\langle X_r,\nabla_{X_i}X_i\rangle,
$$
where the first term vanishes, since $f$ is radial. The product in the last term can be computed by Koszul's formula, which at the point $x$ gives
$$
\langle X_r,\nabla_{X_i}X_i\rangle=-\frac{1}{2}X_r\abs{X_i}^2=-\cosh(r)\sinh(r).
$$
Combining the last relations with formulae~\eqref{lap:f} and~\eqref{hess:r}, we arrive at the statement of the lemma. 
\end{proof}

Now choose a sequence $R_k\to +\infty$ such that $R_{k+1}>2R_k$, and denote by $\upsilon_k$ compactly supported smooth functions on the real line such that the supports $\spt\upsilon_k$ are mutually disjoint, and
\begin{multline}
\label{aprox:1}
\int\limits_{0}^{+\infty}\abs{\upsilon_{k}''(t)+(m-1)\coth(t)\upsilon_{k}'(t)+\lambda\upsilon_{k}(t)}^2\sinh^{m-1}(t)dt \\ \leqslant 2\int\limits_{R_k/2}^{R_k}\abs{\upsilon_{R_k}''(t)+(m-1)\coth(t)\upsilon_{R_k}'(t)+\lambda\upsilon_{R_k}(t)}^2\sinh^{m-1}(t)dt,
\end{multline}

\begin{equation}
\label{aprox:2}
\int\limits_{R_k/2}^{R_k}\abs{\upsilon_{R_k}}^2\sinh^{m-1}(t)dt\leqslant 2\int\limits_{0}^{+\infty}\abs{\upsilon_{k}}^2\sinh^{m-1}(t)dt.
\end{equation}
Such smooth functions $\upsilon_k$ can be constructed as approximations of $W^{2,2}$-functions $\upsilon_{R_k}$, for example, by using the mollification technique. Denote by $\phi_k$ the restriction of $\upsilon_k$ to the cone $C^m$. Then, by Lemma~\ref{laplace:cone} we obtain
$$
\int \abs{\Delta\phi_k-\lambda\phi_k}^2d\norm{C^m}=\omega(\Gamma^{m-1})\int\limits_{0}^{+\infty}\abs{\upsilon_{k}''(t)+(m-1)\coth(t)\upsilon_{k}'(t)+\lambda\upsilon_{k}(t)}^2\sinh^{m-1}(t)dt, 
$$
and the combination with relations~\eqref{aprox:1}--\eqref{aprox:2} yields
\begin{equation}
\label{f:cone}
\int \abs{\Delta\phi_k-\lambda\phi_k}^2d\norm{C^m}\leqslant\epsilon_k\int\abs{\phi_k}^2d\norm{C^m},
\end{equation}
where $\epsilon_k=4\epsilon_{R_k}$, and $\epsilon_{R_k}$ is given in Lemma~\ref{est}. Since $\epsilon_{R_k}\to 0+$, we conclude that the statement of the theorem holds for cones.

\medskip
\noindent
{\em Step~2: minimal varieties.} To get similar estimates for the test-functions $\upsilon_k$ restricted to $\reg\Sigma^m$ we need the following lemma.
\begin{lemma}
\label{laplace:current}
Let $\Sigma^m$ be a stationary locally rectifiable $m$-current from the set $\mathcal M(\Gamma^{m-1})$. Then for any $\varepsilon>0$ there exists $R>0$ such that for any smooth radial function $f$ that is supported in the complement of the hyperbolic ball $\mathbf{B}_{R}(p)$ the Laplacian of its restriction to $\spt\Sigma^m$ is given by the formula
$$
-\Delta f(x)=f''(r(x))+(m-1)\coth(r(x))f'(r(x)) +E(x),
$$
where the error term $E(x)$ satisfies the estimate
$$
\abs{E(x)}\leqslant\varepsilon\left(\abs{f''(r(x))}+\abs{f'(r(x))}\right),
$$
$r(x)=\dist(x,p)$, and $x\in\spt \Sigma^m$.
\end{lemma}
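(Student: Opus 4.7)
Two ingredients drive the proof: the standard minimal-submanifold formula for the Laplacian, together with the uniform $C^1$-control on tangent planes near $\Gamma^{m-1}$ that is built into the class $\mathcal M(\Gamma^{m-1})$.

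First, since $\Sigma^m$ is stationary, Allard's theorem together with elliptic regularity (see the first remark in Section~\ref{prems}) ensures that $\reg_1\Sigma^m$ is smooth and has vanishing mean curvature. Consequently, for any smooth function $f$ on $\mathbf H^{n+1}$ and any hyperbolic-orthonormal basis $\{Y_0,\ldots,Y_{m-1}\}$ of $T_x\Sigma^m$, minimality yields
$$
-\Delta_{\Sigma^m}f(x)=\sum_{j=0}^{m-1}\hess f(Y_j,Y_j).
$$
The computation carried out in the proof of Lemma~\ref{laplace:cone} moreover shows that, for a radial function $f$, the hyperbolic Hessian has the pointwise form $f''(r)\,dr\otimes dr+f'(r)\coth(r)(g-dr\otimes dr)$, so for every unit vector $V$
$$
\hess f(V,V)=f''(r)\langle V,X_r\rangle^2+f'(r)\coth(r)\bigl(1-\langle V,X_r\rangle^2\bigr).
$$
Summing over $\{Y_j\}$ and setting $\alpha(x):=\sum_j\langle Y_j,X_r\rangle^2\in[0,1]$, which is the squared hyperbolic norm of the orthogonal projection of $X_r$ onto $T_x\Sigma^m$, a short rearrangement yields
$$
-\Delta_{\Sigma^m}f(x)=\bigl[f''(r)+(m-1)\coth(r)f'(r)\bigr]+(1-\alpha(x))\bigl[\coth(r)f'(r)-f''(r)\bigr],
$$
so the putative error term $E(x)$ satisfies $|E(x)|\leqslant(1-\alpha(x))\coth(r(x))(|f'(r(x))|+|f''(r(x))|)$.

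The heart of the proof is then to show that $1-\alpha(x)\to 0$ uniformly as $r(x)\to+\infty$. By condition~(i) in the definition of $\mathcal M(\Gamma^{m-1})$, every $x\in\spt\Sigma^m$ with $r(x)$ sufficiently large lies in $\reg_1\Sigma^m$; by condition~(ii), the union $\reg_1\Sigma^m\cup\Gamma^{m-1}$ is a $C^1$-smooth manifold with boundary that meets $S^n_\infty(\mathbf H^{n+1})$ orthogonally. Orthogonality is precisely the statement that at each $z\in\Gamma^{m-1}$ the radial vector $X_r(z)$ lies in $T_z(\reg_1\Sigma^m\cup\Gamma^{m-1})$, i.e.\ $\alpha\equiv 1$ on $\Gamma^{m-1}$. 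Since the hyperbolic metric in the Poincar\'e model is conformally Euclidean, angles between tangent subspaces are the same in either metric, the Grassmannian-valued tangent map is continuous up to the boundary, and $\alpha$ extends continuously to $\reg_1\Sigma^m\cup\Gamma^{m-1}$. The compactness of $\Gamma^{m-1}$ combined with $\overline{\spt\Sigma^m}\cap S^n_\infty(\mathbf H^{n+1})=\Gamma^{m-1}$ then gives the desired uniform convergence. For any prescribed $\varepsilon>0$, one chooses $R$ large enough so that $\coth(r)<2$ and $1-\alpha(x)<\varepsilon/2$ hold simultaneously for all $r(x)\geqslant R$; the bound above then gives $|E(x)|\leqslant\varepsilon(|f'(r(x))|+|f''(r(x))|)$, as required.

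The only substantive point is the uniform convergence $1-\alpha\to 0$. Pointwise convergence of $\alpha$ to $1$ at each $z\in\Gamma^{m-1}$ is encoded directly in the orthogonality hypothesis in~(ii), but uniformity genuinely uses both the compactness of $\Gamma^{m-1}$ and the conformal equivalence of angles between the hyperbolic and Euclidean metrics — the latter being what allows the Euclidean $C^1$-smoothness up to the boundary, in which condition~(ii) is naturally phrased, to control the intrinsically hyperbolic quantity $\alpha$. Once uniform closeness is in hand, everything else is a local trace computation.
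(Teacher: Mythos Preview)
Your proof is correct and arrives at exactly the same error term as the paper, but the packaging is cleaner. The paper builds an adapted frame $\tilde X_r,X_1,\ldots,X_{m-1}$ at each point, writes $\tilde X_r=\varphi X_r+Y_r$, and then has to compute $\hess_xf(Y_r,Y_r)=-f'(r)\langle X_r,\nabla_{Y_r}Y_r\rangle$ by expanding $Y_r$ in the normal directions $X_m,\ldots,X_n$ and invoking Koszul's formula to show $\abs{\langle X_r,\nabla_{Y_r}Y_r\rangle}\leqslant (n-m)\coth(r)\abs{Y_r}^2$. You bypass this by using the closed form $\hess f=f''\,dr\otimes dr+f'\coth(r)(g-dr\otimes dr)$ for the ambient Hessian of a radial function; tracing over $T_x\Sigma^m$ then gives the error as $(1-\alpha(x))[\coth(r)f'(r)-f''(r)]$ in one line. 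Since $\alpha=\varphi^2$ in the paper's notation, the two error terms are literally identical, and your argument that $1-\alpha\to 0$ uniformly (via conformality of the Poincar\'e model, $C^1$-continuity of tangent planes up to $\Gamma^{m-1}$, orthogonality at the boundary, and compactness of $\Gamma^{m-1}$) is the same geometric input the paper uses for $1-\varphi^2$ and $\abs{Y_r}$. What your route buys is that no coordinate Koszul computation is needed; what the paper's route buys is that the Hessian formula is derived from scratch rather than quoted. One small remark: the global Hessian identity you use is standard, but it is not quite proved in full in the cone lemma you cite---that lemma only evaluates $\hess f$ on the specific vectors $X_r$ and $X_i$, so strictly speaking you are also relying on the well-known fact $\hess r=\coth(r)(g-dr\otimes dr)$ in $\mathbf{H}^{n+1}$.
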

\begin{proof}
We follow the notation in the proof of Lemma~\ref{laplace:cone}. Below we always assume that $R>0$ is sufficiently large such that all points $x\in\spt\Sigma^m$ with $r(x)>R$ are contained in $\reg\Sigma^m$.

\smallskip
\noindent
{\em Partial case.} To illustrate the main idea, we first consider the case when the radial vector $(\partial/\partial r)$ lies in the tangent space $T_x\Sigma^m$ for a given point $x\in\spt\Sigma^m$. Our claim that in this case the error term in the formula for the Laplacian vanishes.

In more detail, consider the intersection of the geodesic sphere $\partial\mathbf{B}_r(p)$ with $\spt\Sigma^m$, where $r=r(x)$. Since $(\partial/\partial r)$ and $T_x\partial\mathbf{B}_r(p)$ span the tangent space $T_x\mathbf{H}^{n+1}$, we conclude that $\spt\Sigma^m$ and $\partial\mathbf{B}_r(p)$ intersect transversally around $x$, and $\spt\Sigma^m\cap\partial\mathbf{B}_r(p)$ is a submanifold of dimension $m-1$ in $\partial\mathbf{B}_r(p)$ around $x$. In geodesic spherical coordinates the point $x$ corresponds to the value $r(x)$ and a point $\xi\in S^n$, and the intersection  $\spt\Sigma^m\cap\partial\mathbf{B}_r(p)$ corresponds to a submanifold in $S^n$ via the exponential map. Choosing coordinates $(\theta^1,\ldots,\theta^n)$ around $\xi$ on $S^n$ in the same fashion as in the proof of Lemma~\ref{laplace:cone}, we obtain the collection of vector fields
$$
X_r=\frac{\partial}{\partial r},\qquad X_i=D\exp_p\left(\frac{\partial}{\partial\theta^i}\right),\qquad i=1,\ldots,n.
$$
They satisfy the same properties as in the proof of Lemma~\ref{laplace:cone}, and in particular, $X_r$, $X_1,\ldots,X_{m-1}$ form an orthogonal basis of the tangent space $T_x\Sigma^m$. Since $\spt\Sigma^m$ is a smooth submanifold around $x$ whose mean curvature vanishes, a standard computation, see~\cite{Ko21}, shows that the Laplacian of the restriction of $f$ to $\spt\Sigma^m$ at $x$ is given by formula~\eqref{lap:f}. The rest of the argument in this case follows the lines in the proof of Lemma~\ref{laplace:cone}.

\smallskip
\noindent
{\em General case.} Since $\spt\Sigma^m$ meets the boundary at infinity orthogonally, for a sufficiently large $r=r(x)$ the tangent spaces $T_x\Sigma^m$ and $T_x\partial\mathbf{B}_r(p)$ span $T_x\mathbf{H}^{n+1}$, and we conclude that $\spt\Sigma^m$ and $\partial\mathbf{B}_r(p)$ intersect transversally. Thus, the intersection $\spt\Sigma^m\cap\partial\mathbf{B}_r(p)$ is a submanifold in $\partial\mathbf{B}_r(p)$, and arguing as above we can construct vector fields $X_i$ in a neighbourhood of $x$ in $\mathbf{H}^{n+1}$, where $i=1,\ldots, n$, such that at the point $x$ they form an orthogonal system, and the first $(m-1)$ vectors $X_1,\ldots, X_{m-1}$ at $x$ lie in the tangent space $T_x\Sigma^m$. Now we define the vector $\tilde X_r\in T_x\Sigma^m$ as a unit vector that is orthogonal to these vectors and such that the product $\langle \tilde X_r,X_r\rangle$ is positive, where $X_r$ is the radial vector $\partial/\partial r$ used above. This vector $\tilde X_r$ can be extended to a vector field around $x$ such that
\begin{equation}
\label{def:xr}
\abs{\tilde X_r}^2=1\qquad\text{and}\qquad\langle\tilde X_r,X_i\rangle=0\quad\text{for all}\quad i=1,\ldots,m-1.
\end{equation}
In fact, since $\spt\Sigma^m$ is $C^1$-smooth up to boundary we can define the vector field $\tilde X_r$ in a neighbourhood of $\Gamma^{m-1}$ as unit vector field such that for any $x\in\spt\Sigma^m$ with a sufficiently large $r(x)$ it lies in $T_x\Sigma^m$ and is orthogonal to $\spt\Sigma^m\cap\partial\mathbf{B}_r(p)$. 

Next, we claim that the hyperbolic length $\abs{\tilde X_r-X_r}$ tends to zero as $r=r(x)$ tends to infinity. Indeed, denote by $\tilde Z_r$ and $Z_r$ vectors of unit Euclidean length obtained by scaling $\tilde X_r$ and $X_r$ respectively. Since $\spt\Sigma^m$ meets the boundary of the Euclidean ball $\mathbb B^{n+1}$ orthogonally, it is straightforward to see that the difference $\tilde Z_r-Z_r$ can be made arbitrarily small by choosing a sufficiently large $r=r(x)$. Now the claim follows from the observation that the hyperbolic length $\abs{\tilde X_r-X_r}$ and the Euclidean length of $\tilde Z_r-Z_r$ coincide. Thus, writing $\tilde X_r$ in the form
\begin{equation}
\label{decom}
\tilde X_r=\varphi X_r+Y_r,\qquad\text{where}\quad\langle X_r,Y_r\rangle=0,
\end{equation}
we conclude that the function $(1-\varphi^2)$ and the hyperbolic length $\abs{Y_r}$ tend to zero when $r=r(x)$ tends to infinity. Similarly to the computation used above, the Laplacian of the restriction of $f$ to $\Sigma^m$ at $x$ is now given by the formula
$$
-\Delta f(x)=\hess_x f(\tilde X_r,\tilde X_r)+\frac{1}{\sinh^2(r(x))}\sum_{i=1}^{m-1}\hess_x f(X_i,X_i).
$$
The argument in the proof of Lemma~\ref{laplace:cone} shows that the last term gives the same contribution $(m-1)\coth(r)f'(r)$, and it remains to compute the first term only. Using the decomposition in~\eqref{decom}, we obtain
\begin{equation}
\label{hess:d}
\hess_x f(\tilde X_r,\tilde X_r)=\varphi^2\hess_x f(X_r,X_r)+2\varphi\hess_x f(X_r,Y_r)+\hess_x f(Y_r,Y_r).
\end{equation}
As the computation in the proof of Lemma~\ref{laplace:cone} shows, the first term in the last relation equals $\varphi^2f''(r)$, and the second term vanishes,
$$
\hess_x f(X_r,Y_r)=Y_r(f'(r))-f'(r)\langle X_r,\nabla_{Y_r}X_r\rangle=f''(r)\langle X_r,Y_r\rangle-0=0,
$$
where we used that $f$ is radial and $X_r$ has unit length. Similarly, the last term in relation~\eqref{hess:d} can be computed in the following fashion:
\begin{multline*}
\hess_x f(Y_r,Y_r)=Y_r(D_xf(Y_r))-D_xf(\nabla_{Y_r}Y_r)\\=Y_r(f'(r)\langle Y_r,X_r\rangle)-f'(r)\langle X_r,\nabla_{Y_r}Y_r\rangle=0-f'(r)\langle X_r,\nabla_{Y_r}Y_r\rangle.
\end{multline*}
Combining all relations above, we conclude that the error term $E(x)$ in the statement of the lemma, has the form
$$
E(x)=(1-\varphi^2)f''(r)-\langle X_r,\nabla_{Y_r}Y_r\rangle f'(r).
$$
Since we already know that the function $(1-\varphi^2)$ is small for a large $r=r(x)$, for a proof of the lemma it remains to show that so is the product $\langle X_r,\nabla_{Y_r}Y_r\rangle$.

Using relations~\eqref{def:xr}--\eqref{decom}, we see that the vector field $Y_r$ can be written in the form
$$
Y_r=\sum_{i=m}^n\alpha_iX_i.
$$
Since the vectors $X_i$ at the point $x$, where $i=1,\ldots,n$, form an orthogonal system, the values of the functions $\alpha_i$ at $x$ can be estimated in the following way:
$$
\abs{\alpha_i(x)}=\frac{\abs{\langle Y_r,X_i\rangle_x}}{\abs{X_i}^2_x}\leqslant\frac{\abs{Y_r}_x}{\abs{X_i}_x}=\frac{\abs{Y_r}_x}{\sinh(r)},
$$
where $i=m,\ldots,n$. Since the radial vector $X_r$ is orthogonal to the vectors $X_i$'s, a direct computation gives
$$
\langle X_r,\nabla_{Y_r}Y_r\rangle=\sum_{i,j=m}^n\alpha_i\alpha_j\langle X_r,\nabla_{X_i}X_j\rangle,
$$
where the products $\langle X_r,\nabla_{X_i}X_j\rangle$ can be computed using Koszul's formula. In particular, at the fixed point $x$ we obtain 
$$
\langle X_r,\nabla_{X_i}X_j\rangle_x=-\delta_{ij}\cosh(r)\sinh(r),
$$
where $i,j=m,\dots,n$, $r=r(x)$, and $\delta_{ij}$ is the Kronecker delta. Combining the relations above, we finally arrive at the inequality
$$
\abs{\langle X_r,\nabla_{Y_r}Y_r\rangle_x}\leqslant(n-m)\coth(r)\abs{Y_r}^2_x.
$$
Since the hyperbolic length of $\abs{Y_r}$ can be made arbitrarily small by choosing a point $x$ with a sufficiently large $r=r(x)$, we conclude that so can be the left-hand side above.
\end{proof}

Now for a given $0<\varepsilon<1$ let $R>0$ be a real number that satisfies the conclusions of  Lemmas~\ref{vol} and~\ref{est}, as well as Lemma~\ref{laplace:current}. For a sequence $R_k\to +\infty$ such that $R_{k+1}>2R_k>2R$, we can choose compactly supported smooth functions  $\upsilon_k$ defined on the real line such that their supports $\spt\upsilon_k$ are mutually disjoint, they satisfy relations~\eqref{aprox:1}-\eqref{aprox:2}, and in addition, the relations
$$
\int\limits_{0}^{+\infty}\abs{\upsilon'_{k}}^2\sinh^{m-1}(t)dt\leqslant 2\int\limits_{R_k/2}^{R_k}\abs{\upsilon'_{R_k}}^2\sinh^{m-1}(t)dt,
$$
$$
\int\limits_{0}^{+\infty}\abs{\upsilon''_{k}}^2\sinh^{m-1}(t)dt\leqslant 2\int\limits_{R_k/2}^{R_k}\abs{\upsilon''_{R_k}}^2\sinh^{m-1}(t)dt.
$$
Combining these relations with inequalities in Lemma~\ref{est} and relation~\eqref{aprox:2}, we obtain
\begin{equation}
\label{aprox:3}
\int\limits_{0}^{+\infty}\abs{\upsilon'_{k}}^2\sinh^{m-1}(t)dt\leqslant 4C_*\int\limits_{0}^{+\infty}\abs{\upsilon_{k}}^2\sinh^{m-1}(t)dt,
\end{equation}
\begin{equation}
\label{aprox:4}
\int\limits_{0}^{+\infty}\abs{\upsilon''_{k}}^2\sinh^{m-1}(t)dt\leqslant 4C_*\int\limits_{0}^{+\infty}\abs{\upsilon_{k}}^2\sinh^{m-1}(t)dt.
\end{equation}
Denote by $\tilde\phi_k$ the restriction of the function $\upsilon_k$ to the regular set $\reg\Sigma^m$. Then, by Lemma~\ref{laplace:current} we obtain
\begin{multline}
\label{f:source}
\int \abs{\Delta\tilde\phi_k-\lambda\tilde\phi_k}^2d\norm{\Sigma^m}\leqslant
3\int \abs{\upsilon''_k+(m-1)\coth(\cdot)\upsilon'_k+\lambda\upsilon_k}^2 d\norm{\Sigma^m}\\
+3\varepsilon^2\int \abs{\upsilon''_k}^2+\abs{\upsilon'_k}^2 d\norm{\Sigma^m}.
\end{multline}
To estimate the first integral on the right hand-side above we use Lemmata~\ref{vol} and~\ref{laplace:cone}, and already obtained estimate~\eqref{f:cone} for cones. Combining all these ingredients, we get
\begin{multline*}
\int \abs{\upsilon''_k+(m-1)\coth(\cdot)\upsilon'_k+\lambda\upsilon_k}^2 d\norm{\Sigma^m}\\
\leqslant(1+\varepsilon)\int \abs{\upsilon''_k+(m-1)\coth(\cdot)\upsilon'_k+\lambda\upsilon_k}^2 d\norm{C^m}=(1+\varepsilon)\int \abs{\Delta\phi_k-\lambda\phi_k}^2d\norm{C^m}\\
\leqslant\varepsilon_k(1+\varepsilon)\int\abs{\phi_k}^2d\norm{C^m}
\leqslant\varepsilon_k\frac{1+\varepsilon}{1-\varepsilon}\int\abs{\tilde\phi_k}^2d\norm{\Sigma^m}.
\end{multline*}
Here, as in Step~1, we denote by $\phi_k$ the restriction of the radial function $\upsilon_k$ to the cone $C^m$, and $\varepsilon_k=4\varepsilon_{R_k}$, where $\varepsilon_R$ is given by the relation in Lemma~\ref{est}. To estimate the second integral on the right hand-side of relation~\eqref{f:source}, we use integral inequalities~\eqref{aprox:3} and~\eqref{aprox:4} for the first two derivatives of $\upsilon_k$. In more detail, we can bound the $L^2$-norm of $\upsilon'_k$ on $\reg\Sigma^m$ in the following fashion:
\begin{multline*}
\int\abs{\upsilon'_k}^2 d\norm{\Sigma^m}\leqslant (1+\varepsilon)\int\abs{\upsilon'_k}^2 d\norm{C^m}
\\=(1+\varepsilon)\omega(\Gamma^{m-1})\int\limits_0^{+\infty}\abs{\upsilon'_k(t)}^2\sinh^{m-1}(t)dt\\
\leqslant 4C_*(1+\varepsilon)\omega(\Gamma^{m-1})\int\limits_0^{+\infty}\abs{\upsilon_{k}(t)}^2\sinh^{m-1}(t)dt\\ =4C_*(1+\varepsilon)\int\abs{\upsilon_k}^2 d\norm{C^m}
\leqslant 4C_*\frac{1+\varepsilon}{1-\varepsilon}\int\abs{\tilde\phi_k}^2 d\norm{\Sigma^m}.
\end{multline*}
Similarly, we have
$$
\int\abs{\upsilon''_k}^2 d\norm{\Sigma^m}\leqslant 4C_*\frac{1+\varepsilon}{1-\varepsilon}\int\abs{\tilde\phi_k}^2 d\norm{\Sigma^m}.
$$
Combining these estimates with relation~\eqref{f:source}, we finally obtain
$$
\int \abs{\Delta\tilde\phi_k-\lambda\tilde\phi_k}^2d\norm{\Sigma^m}\leqslant
(3\varepsilon_k+24C_*\varepsilon^2)\frac{1+\varepsilon}{1-\varepsilon}\int\abs{\tilde\phi_k}^2 d\norm{\Sigma^m}.
$$
Since $0<\epsilon<1$ is arbitrary and $R_k\to+\infty$, choosing a subsequence $\tilde\phi_{k_\ell}$, we may assume that
$$
\int \abs{\Delta\tilde\phi_{k_\ell}-\lambda\tilde\phi_{k_\ell}}^2d\norm{\Sigma^m}\leqslant
(3\varepsilon_{k_\ell}+24C_*/\ell^{2})\frac{1+1/\ell}{1-1/{\ell}}\int\abs{\tilde\phi_{k_\ell}}^2 d\norm{\Sigma^m}
$$
for all integers $\ell>1$. Since $\varepsilon_k=4\varepsilon_{R_k}\to 0+$, by Lemma~\ref{est},  we are done.
\qed

\subsection{Proof of Theorem~\ref{t1}}
The proof of Theorem~\ref{t1} follows closely the lines in the proof of Theorem~\ref{t2} for the higher codimension area-minimising current. In more detail, by Proposition~\ref{p:hl} any area-minimising locally rectifiable $n$-current $\Sigma^n$ that is asymptotic to $\Gamma^{n-1}$ lies in the class $\mathcal M(\Gamma^{n-1})$. Thus, the argument in the proof of Theorem~\ref{t2} carries over directly, and shows that the spectrum of the Friedrichs extension of the Laplacian is indeed the interval $[(n-1)^2/4,+\infty)$.
\qed

\section{Final remarks}
In this section we collect a few useful remarks, reflecting on the proof of Theorem~\ref{t2}.

\begin{remark}
It might be useful to note that together with Remark~\ref{rem:cones} the argument in the proof of Theorem~\ref{t2} shows that the Laplacian spectrum of any cone asymptotic to $\Gamma^{m-1}$ is precisely the interval $[(m-1)^2/4,+\infty)$. Indeed, the statement holds for the cone $C^m$, given by relation~\eqref{cone:def}, that is used in the proof. Its singular point $p$ corresponds to the origin of a unit Euclidean ball $\mathbb{B}^{n+1}$ via the Poincar\'e model. Since the isometry group of $\mathbf{H}^{n+1}$ acts transitively on it, we conclude that the statement holds for all cones.
\end{remark}

\begin{remark}
\label{r:c1}
Our argument in the proofs of Theorems~\ref{t1} and~\ref{t2} applies to any stationary current that lies in the class $\mathcal M(\Gamma^{m-1})$, that is if it does not contain singularities near the boundary, is $C^1$-smooth up to the boundary, and meets the boundary orthogonally. In co-dimension one the area-minimising hypothesis is needed only to ensure that these properties hold, see Proposition~\ref{p:hl}. Let us point out that if for a stationary current $\Sigma^m$ its support $\spt\Sigma^m$ is a $C^1$-smooth submanifold near and up to boundary, then it automatically meets the boundary at infinity orthogonally. Indeed, using a standard argument based on the first variation, see the proof of~\cite[Lemma~5]{An82}, or an appropriate version of the maximum principle~\cite{JT03,SW89}, one can foliate $\mathbf{H}^{n+1}$ by totally geodesic subspaces that serve as barriers for $\Sigma^m$. Then, following the idea in~\cite[Section~1]{HL87}, one can show that the tangent cone at the boundary point $z\in\Gamma^{m-1}$ is contained in the product $T_z\Gamma^{m-1}\times [0,+\infty)$, where we identify $\mathbf{H}^{n+1}$ with the upper half-space. We refer to~\cite{Ko25} where details of this argument can be found.

In particular, the claim above shows that our argument in the proof of Theorem~\ref{t2} applies to $C^1$-smooth up to boundary at infinity minimal submanifolds, sharpening Theorem~\ref{t0} due to~\cite{LMMV}, where the assumption that they are $C^2$-smooth up to boundary is used. The last hypothesis is used in~\cite{LMMV} to deduce that the corresponding submanifold has finite total curvature, which is an essential step in the proof of Theorem~\ref{t0} in that paper. The latter fails even for simplest singular minimal submanifolds, such as cones.
\end{remark}

\begin{remark}
Let $\Sigma^m$ be an area-minimising $m$-current in $\mathbf{H}^{n+1}$ that satisfies the hypotheses of Theorem~\ref{t1} or Theorem~\ref{t2}. As was discussed in Section~\ref{prems}, to our knowledge it is unknown whether the Laplace operator on $\Sigma^m$, defined on smooth compactly supported functions on $\reg\Sigma^m$, has a unique self-adjoint extension. For this reason we had to specify that we consider the Friedrichs extension, that is associated with the closure of the Dirichlet energy. In particular, this gives the variational characterisation of the bottom of the spectrum, which is used to prove the version of McKean's inequality, that is inequality~\eqref{MK}. On the other hand, in the proof of the statement that any $\lambda>(m-1)^2/4$ lies in the spectrum we used compactly supported smooth test-functions only, and hence, the argument holds for arbitrary self-adjoint extensions.

In summary, our argument in Section~\ref{proofs} shows that the interval $[(m-1)^2/4,+\infty)$ lies in the spectrum of any self-adjoint extension of the Laplacian, and it coincides with the whole spectrum of the Friedrichs extension. 
\end{remark}


{\small

}

\end{document}